\documentclass[11pt]{article}%
\usepackage{amssymb}
\usepackage{eurosym}
\usepackage{amsfonts}
\usepackage{amsmath}
\usepackage{graphicx}%
\usepackage{epstopdf}
\setcounter{MaxMatrixCols}{30}
\providecommand{\U}[1]{\protect\rule{.1in}{.1in}}
\newtheorem{theorem}{Theorem}
\newtheorem{acknowledgement}[theorem]{Acknowledgement}

\newtheorem{definition}[theorem]{Definition}

\newtheorem{lemma}[theorem]{Lemma}

\newtheorem{proposition}[theorem]{Proposition}

\newenvironment{proof}[1][Proof]{\noindent\textbf{#1.} }{\ \rule{0.5em}{0.5em}}
\begin{document}

\title{On the existence of short trajectories of quadratic differentials related to
generalized Jacobi polynomials with non real varying parameters.}
\author{Mondher Chouikhi and Faouzi Thabet\\University of Gab\`{e}s, Tunisia}
\maketitle

\begin{abstract}
The study of the asymptotic distributions of zeros of generalized Jacobi
polynomials with non real varying parameters, leads with quadratic
differentials. In fact, the support of the limit measure of the root-counting
measures sits on the finite critical trajectories of a related quadratic differential.

In this paper, we study the trajectories of this quadratic differential, more
precisely, we give a necessary and sufficient condition on the complex numbers
$a,b,$ and $\lambda$ for the existence of at list one finite critical
trajectory of the quadratic differential $\frac{\lambda^{2}\left(  z-a\right)
\left(  z-b\right)  }{\left(  z^{2}-1\right)  ^{2}}dz^{2}$.

\end{abstract}

\footnote{Keywords: Generalized Jacobi polynomials. Cauchy transform.
Trajectories and orthogonal trajectories of a quadratic differential.
Homotopic Jordan arcs.
\par
MSC 2010: 30L05-28A99
\par
Corresponding Author :faouzithabet@yahoo.fr}\vspace{1cm}

\section{Introduction}

This paper is a continuation of oldest works motivated by the large-degree
analysis of the behavior of the Jacobi polynomials $P_{n}^{(\alpha,\beta)}$,
when the parameters $\alpha,\beta\in%
\mathbb{C}
$ depend on the degree $n$ linearly. Recall that these polynomials can be
given explicitly by (see \cite{Szego})
\begin{equation}
P_{n}^{(\alpha,\beta)}\left(  z\right)  =2^{-n}\sum_{k=0}^{n}\left(
\begin{array}
[c]{c}%
n+\alpha\\
n-k
\end{array}
\right)  \left(
\begin{array}
[c]{c}%
n+\beta\\
\ k
\end{array}
\right)  \left(  z-1\right)  ^{k}\left(  z+1\right)  ^{n-k}, \label{jacobiDef}%
\end{equation}
where $\left(
\begin{array}
[c]{c}%
\gamma\\
k
\end{array}
\right)  =\frac{\gamma\left(  \gamma-1\right)  ...\left(  \gamma-k+1\right)
}{k!}$ for $\left(  \gamma,k\right)  \in%
\mathbb{C}
\times%
\mathbb{N}
.$ Equivalently, the Jacobi polynomials can be defined by the well-known
Rodrigues formula
\[
P_{n}^{(\alpha,\beta)}\left(  z\right)  =\frac{1}{2^{n}n!}\left(  z-1\right)
^{-\alpha}\left(  z+1\right)  ^{-\beta}\left(  \frac{d}{dz}\right)
^{n}\left[  \left(  z-1\right)  ^{n+\alpha}\left(  z+1\right)  ^{n+\beta
}\right]  .
\]
Clearly, polynomials $P_{n}^{(\alpha,\beta)}$ are entire functions of the
complex parameters $\alpha,\beta$.

The classical case is when $\alpha,\beta>-1$: for these values of the
parameters, the Jacobi polynomials are orthogonal on $[-1,1]$ with respect to
the weight function $(1-x)^{\alpha}(1+x)^{\beta}$.

We consider the sequence $P_{n}^{nA,nB},$ as $n\rightarrow\infty$. The case
$A,B\geq0$ can be tackled using the standard tools related to varying
orthogonality and equilibrium measures in an external field on $%
\mathbb{R}
$, see e.g.~\cite{Gonchar Rakhmanov}, or \cite{Gawronsky}. The general
situation $A,B\in%
\mathbb{R}
$ was analyzed in\cite{ABJK AMF RO},\cite{ABJKuij AMF},\cite{AMF PGM RO}. The
situation $A\notin%
\mathbb{R}
,$ $B>0$ was analyzed in \cite{AMF FT}. In this paper, we are interested in
the situation when
\begin{equation}
A\notin%
\mathbb{R}
,B\notin%
\mathbb{R}
,A+B+1\neq0,A+B+2\neq0. \label{cond sur Aet B}%
\end{equation}

\begin{definition}
\bigskip For a compactly supported finite complex-valued Borel measure $\mu$,
we define its \emph{Cauchy transform} $\mathcal{C}_{\mu}$ as
\[
\mathcal{C}_{\mu}\left(  z\right)  =\int_{%
\mathbb{C}
}\frac{d\mu\left(  t\right)  }{z-t},\quad z\in%
\mathbb{C}
\setminus supp\left(  \mu\right)  .
\]

\end{definition}

For instance, if $P$ is a polynomial of degree $n,$ then the Cauchy transform
$\mathcal{C}_{P}$ of its \emph{normalized root-counting measure} $\frac{1}%
{n}\sum_{p\left(  a\right)  =0}\delta_{a}$ (where $\delta_{a}$ is the Dirac
measure supported at $a$) is given by
\[
\mathcal{C}_{p}\left(  z\right)  =\frac{1}{n}\sum_{p\left(  a\right)  =0}%
\frac{1}{z-a}=\frac{P^{\prime}\left(  z\right)  }{nP\left(  z\right)  }.
\]
The Cauchy transform of a compactly supported finite complex-valued Borel
measure $\mu$ defines an analytic function in $%
\mathbb{C}
\diagdown supp\left(  \mu\right)  $ satisfying the properties
\[
\mathcal{C}_{\mu}\left(  z\right)  \sim\frac{\mu\left(
\mathbb{C}
\right)  }{z},\quad z\longrightarrow\infty;\quad\mu=\frac{1}{\pi}%
\frac{\partial\mathcal{C}_{\mu}}{\partial\overline{z}}.
\]
The Cauchy transform of a non compactly supported finite complex-valued Borel
measure $\mu$ can defined in the distribution sense.

The following Theorem gives the connection between the limit behavior of the
zeros of the sequence \ref{jacobiDef}, and the structure of trajectories of a
particular quadratic differential.

\begin{theorem}
\label{shapiro+solynin}Suppose that a sequence of Jacobi polynomials
$P_{n}^{(\alpha_{n},\beta_{n})}$ satisfies conditions:

\begin{enumerate}
\item[(i)] $\lim_{n\rightarrow\infty}\frac{\alpha_{n}}{n}=A,\lim
_{n\rightarrow\infty}\frac{\beta_{n}}{n}=B;$ $A,B$ satisfy conditions
\ref{cond sur Aet B}.

\item[(ii)] the sequence $\left\{  \mu_{n}\right\}  $ of the corresponding
root-counting measures converges weakly to a compactly supported probability
measure $\mu$ in $%
\mathbb{C}
$.
\end{enumerate}

Then the Cauchy transform $\mathcal{C}_{\mu}$ satisfies almost everywhere in $%
\mathbb{C}
$ the following quadratic equation:%
\[
(1-z^{2})\mathcal{C}_{\mu}^{2}-((A+B)z+A-B)\mathcal{C}_{\mu}+A+B+1=0.
\]
Moreover, the support of $\mu$ consists of finitely many critical horizontal
trajectories of the quadratic differential%
\[
\varpi_{A,B}=-\frac{R_{A,B}\left(  z\right)  }{\left(  z^{2}-1\right)  ^{2}%
}\,dz^{2},
\]
where
\begin{equation}
R_{A,B}\left(  z\right)  =\left(  A+B+2\right)  ^{2}z^{2}+2\left(  A^{2}%
-B^{2}\right)  z+\left(  A-B\right)  ^{2}-4\left(  A+B+1\right)  .
\label{defRAB}%
\end{equation}

\end{theorem}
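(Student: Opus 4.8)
\noindent The plan is to establish the two assertions in turn: first derive the quadratic equation for $\mathcal{C}_\mu$ from the differential equation satisfied by the Jacobi polynomials, and then read off the structure of $\operatorname{supp}\mu$ from the resulting algebraic function.

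\noindent\textbf{Step 1 (the quadratic equation).} I would start from the second-order ODE satisfied by $y=P_n^{(\alpha_n,\beta_n)}$,
\[
(1-z^2)y''+\big[\beta_n-\alpha_n-(\alpha_n+\beta_n+2)z\big]y'+n(n+\alpha_n+\beta_n+1)\,y=0,
\]
and introduce $\mathcal{C}_n(z)=\frac{1}{n}\frac{P_n'(z)}{P_n(z)}$, the Cauchy transform of $\mu_n$. Using $\frac{P_n''}{P_n}=\big(\frac{P_n'}{P_n}\big)'+\big(\frac{P_n'}{P_n}\big)^2=n\mathcal{C}_n'+n^2\mathcal{C}_n^2$ and dividing the ODE by $n^2P_n$, I obtain
\[
(1-z^2)\Big(\mathcal{C}_n^2+\tfrac1n\mathcal{C}_n'\Big)+\tfrac1n\big[\beta_n-\alpha_n-(\alpha_n+\beta_n+2)z\big]\mathcal{C}_n+\tfrac{n+\alpha_n+\beta_n+1}{n}=0.
\]
Hypothesis (ii) gives $\mathcal{C}_n\to\mathcal{C}_\mu$ locally uniformly on $\mathbb{C}\setminus\operatorname{supp}\mu$ (weak convergence of $\mu_n$ yields convergence of Cauchy transforms off the support, upgraded to local uniformity by normal families), so $\mathcal{C}_n'$ is locally bounded and the term $\tfrac1n\mathcal{C}_n'$ drops out. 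Passing to the limit with $\frac{\alpha_n}{n}\to A,\ \frac{\beta_n}{n}\to B$ yields $(1-z^2)\mathcal{C}_\mu^2-((A+B)z+A-B)\mathcal{C}_\mu+(A+B+1)=0$ on $\mathbb{C}\setminus\operatorname{supp}\mu$, hence almost everywhere, since the support is a measure-zero set of arcs.

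\noindent\textbf{Step 2 (solving and identifying the discriminant).} Solving for $\mathcal{C}_\mu$ gives
\[
\mathcal{C}_\mu(z)=\frac{(A+B)z+A-B\pm\sqrt{D(z)}}{2(1-z^2)},\qquad D(z)=\big((A+B)z+A-B\big)^2-4(1-z^2)(A+B+1).
\]
A direct expansion, using $(A+B)^2+4(A+B+1)=(A+B+2)^2$, shows $D=R_{A,B}$, the polynomial (\ref{defRAB}). Thus $\mathcal{C}_\mu$ is an algebraic function whose branch points are the zeros of $R_{A,B}$, the double poles being $\pm1$.

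\noindent\textbf{Step 3 (from the jump to the trajectory condition).} I would recover $\mu$ from $\mu=\frac1\pi\partial_{\bar z}\mathcal{C}_\mu$: on a smooth arc of $\operatorname{supp}\mu$ the two determinations of the root give boundary values with jump $\mathcal{C}_{\mu,+}-\mathcal{C}_{\mu,-}=\pm\frac{\sqrt{R_{A,B}(z)}}{z^2-1}$, so by Plemelj $d\mu=\frac{1}{2\pi i}\frac{\sqrt{R_{A,B}(z)}}{z^2-1}\,dz$ along the arc, up to sign and orientation. Positivity of $\mu$ forces $\frac{\sqrt{R_{A,B}(z)}}{z^2-1}\,dz\in i\mathbb{R}$; squaring gives $\frac{R_{A,B}(z)}{(z^2-1)^2}\,dz^2\le 0$, that is $\varpi_{A,B}=-\frac{R_{A,B}(z)}{(z^2-1)^2}\,dz^2\ge 0$, which is exactly the defining condition for a horizontal trajectory of $\varpi_{A,B}$. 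Since the density $\propto\sqrt{R_{A,B}}$ vanishes precisely at the zeros of $R_{A,B}$, the support arcs must terminate at these finite critical (turning) points, so they are finite critical trajectories; there are finitely many because $\deg R_{A,B}=2$.

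\noindent The main obstacle is Step 3: justifying the Plemelj representation and, above all, extracting the pointwise trajectory condition rigorously from the mere positivity of $\mu$ — controlling the orientation, the branch of the root, and the fact that the arcs join turning points rather than running into the poles $\pm1$. This is where the analytic content of the statement really lies, the rest being the ODE computation of Step 1 and the discriminant identity of Step 2.
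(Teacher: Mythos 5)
First, note that the paper does not actually prove this theorem: its ``proof'' is a pointer to \cite{bullgard}, \cite{shapiro}, \cite{shapiro soly}. Your outline reconstructs the standard argument from exactly those sources, so there is no divergence of method to compare --- the question is only whether your sketch closes the argument, and it does not quite. Steps 1 and 2 are essentially right: the Riccati substitution $\mathcal{C}_n=\frac{1}{n}P_n'/P_n$ in the Jacobi ODE, the passage to the limit, and the discriminant identity $D=R_{A,B}$ via $(A+B)^2+4(A+B+1)=(A+B+2)^2$ are all correct. One small but real gap already sits in Step 1: weak convergence of $\mu_n$ to a compactly supported $\mu$ does \emph{not} by itself give locally uniform convergence of $\mathcal{C}_{\mu_n}$ on $\mathbb{C}\setminus\mathrm{supp}\,\mu$, because an $o(1)$ fraction of zeros may escape any fixed compact set and still perturb $\mathcal{C}_{\mu_n}$ pointwise. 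The references handle this by working with $L^1_{loc}$ convergence of Cauchy transforms and a.e.\ subsequential limits (whence the ``almost everywhere'' in the statement), not by normal families off the support.

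The substantive gap is the one you yourself flag in Step 3, and it is worth naming precisely what is missing rather than just that something is. The positivity argument correctly yields that on any smooth arc of $\mathrm{supp}\,\mu$ across which $\mathcal{C}_\mu$ jumps between its two branches one has $-R_{A,B}(z)\,dz^2/(z^2-1)^2\ge 0$, i.e.\ the arc lies on a horizontal trajectory. But ``finitely many \emph{critical} trajectories'' requires two further exclusions that do not follow from the density vanishing at the zeros of $R_{A,B}$: (a) the support could a priori contain a closed \emph{non-critical} trajectory (a loop around $\pm1$ avoiding $\zeta_\pm$), along which $\sqrt{R_{A,B}}/(z^2-1)$ never vanishes; ruling this out uses single-valuedness of $\mathcal{C}_\mu$ and the residues of $\sqrt{R_{A,B}}/(z^2-1)$ at $\pm1$ and $\infty$, i.e.\ the same residue bookkeeping the paper performs in Proposition \ref{premier}; and (b) one must show the support arcs do not run into the double poles $\pm1$, which again is a statement about the local trajectory structure at a double pole (circular versus radial versus log-spiral, governed by the sign of the residue) rather than about the turning points. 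Finiteness then comes from Jenkins' basic structure theory (finitely many critical trajectories emanate from a finite set of critical points), not merely from $\deg R_{A,B}=2$. So the proposal is a faithful road map to the cited proofs, but as written it establishes the quadratic equation and the trajectory \emph{condition} on the support, not the full structural conclusion.
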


\begin{proof}
See e.g \cite{bullgard}, \cite{shapiro}; or \cite{shapiro soly}, and
references therein.
\end{proof}

\bigskip The critical graph $\Gamma_{A,B}$ of the quadratic differential
$\varpi_{A,B},$ with $A,B\in%
\mathbb{R}
$ depends on the sign of
\[
\Delta=\left(  A+1\right)  \left(  B+1\right)  \left(  A+B+1\right)  .
\]
If $\Delta>0,$ then $\Gamma_{A,B}$ is formed by two loops around $-1$ and $1$,
joined by a segment included in $\left(  -1,1\right)  .$ If $\Delta<0,$ then
$\Gamma_{A,B}$ is formed by two loops around $-1$ and $1,$ with common edge
crossing $\left(  -1,1\right)  .$ See Figure \ref{Fig1}.%
\begin{figure}[h]%
\centering
\fbox{\includegraphics[
height=2.0903in,
width=3.7048in
]%
{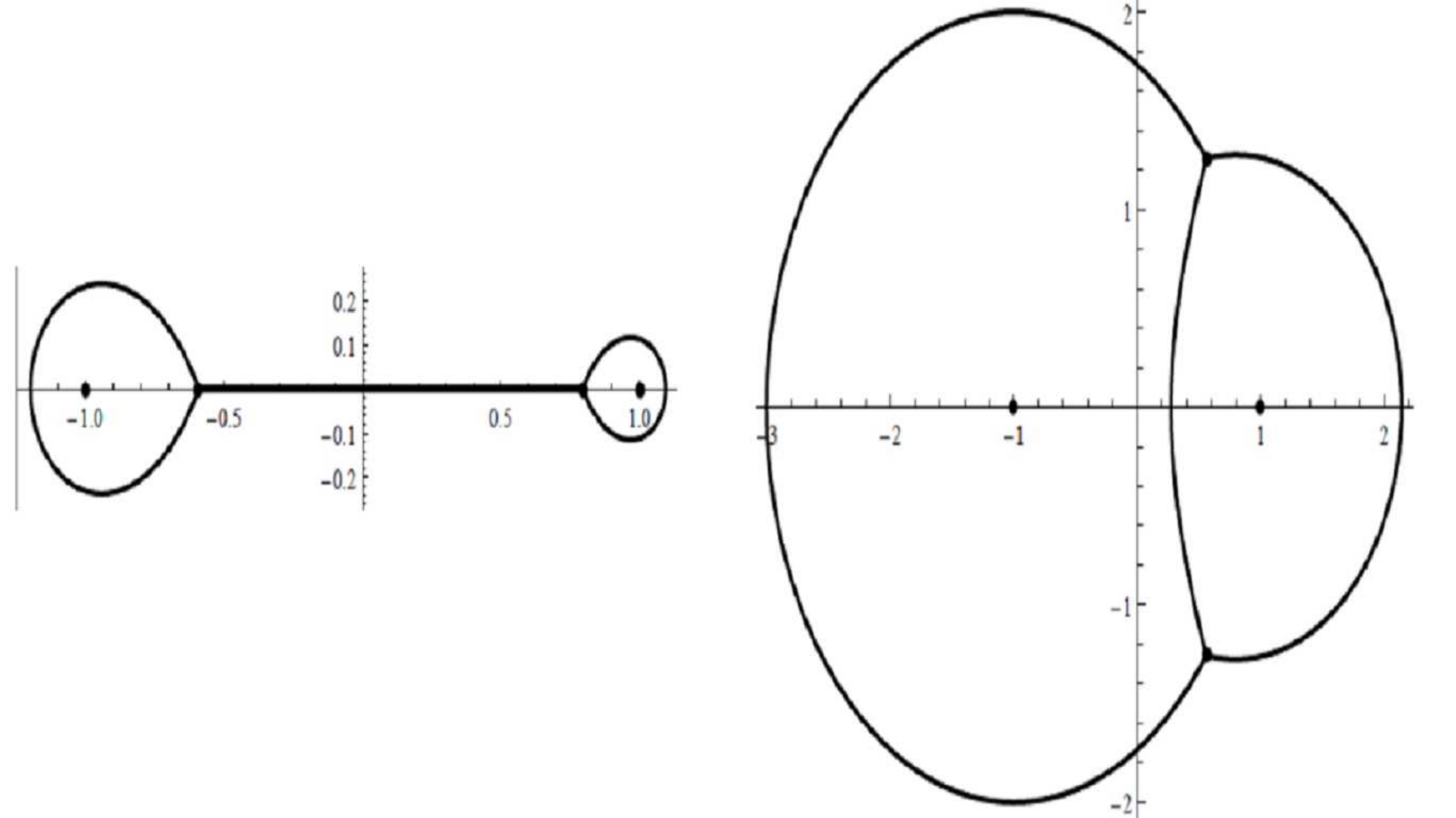}%
}\caption{Critical graph when $A=2,B=3$ (left); $A=-2,B=3$ (right).}%
\label{Fig1}%
\end{figure}

For the case $A\notin%
\mathbb{R}
,B>0,$ $\Gamma_{A,B}$ is formed by a short trajectory, a loop around $-1$, and
two critical trajectories diverging to $1$ and $\infty.$ See Figure \ref{Fig2}%
\begin{figure}[h]%
\centering
\fbox{\includegraphics[
height=2.0903in,
width=3.7048in
]%
{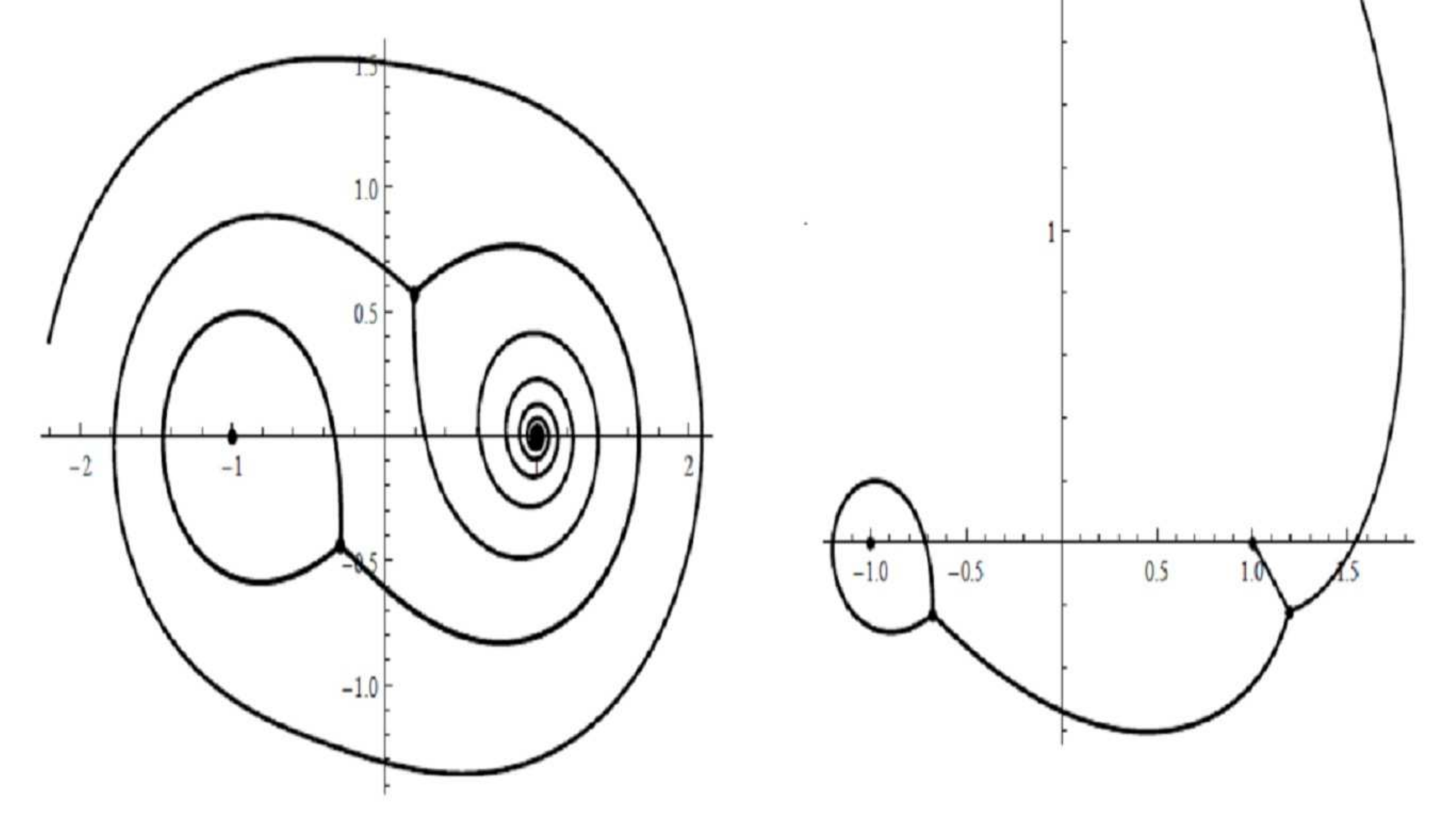}%
}\caption{Critical graph when $A=-1.1+0.1i,$ $B=1$ (left); $A=1.5i,$ $B=2$
(right).}%
\label{Fig2}%
\end{figure}

\bigskip

The main result of this paper is the following theorem

\begin{theorem}
\label{thm:1} Let $A,B$ satisfy assumptions (\ref{cond sur Aet B}). Then, the
structure of the critical graph $\Gamma_{a,b}$ of $\varpi_{A,B}$ is as follows:

\begin{itemize}
\item If $A+B\notin%
\mathbb{R}
,$ then there exist :

\begin{itemize}
\item one short trajectory $\gamma_{A,B}$ of $\varpi_{A,B}$, joining the zeros
$\zeta_{-}$ and $\zeta_{+}$ of $R_{A,B},$

\item two infinite critical trajectories emanating from each zero diverging
differently to $+1,$ $-1,$ or $\infty.$
\end{itemize}

\item If $A+B\in%
\mathbb{R}
,$ then there exist :

\begin{itemize}
\item two short trajectories of $\varpi_{A,B}$, joining $\zeta_{-}$ and
$\zeta_{+},$ and forming a Jordan curve encircling $\pm1,$

\item a trajectory emanating from each zero diverging differently to $+1$ or
$-1.$
\end{itemize}
\end{itemize}
\end{theorem}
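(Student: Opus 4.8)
The plan is to read the global picture off the local behaviour of $\varpi_{A,B}$ at its critical points, turning the dichotomy $A+B\in\mathbb{R}$ versus $A+B\notin\mathbb{R}$ into a statement about the double pole at $\infty$. Writing $\sqrt{\varpi_{A,B}}=\frac{\sqrt{-R_{A,B}(z)}}{z^{2}-1}\,dz$, horizontal trajectories are the level curves $\operatorname{Im}\int^{z}\sqrt{\varpi_{A,B}}=\text{const}$. First I would record the local data. A direct computation gives $R_{A,B}(1)=4A^{2}$ and $R_{A,B}(-1)=4B^{2}$, so near the double poles $z=\pm1$ one has $\varpi_{A,B}\sim-\frac{A^{2}}{(z-1)^{2}}\,dz^{2}$ and $\varpi_{A,B}\sim-\frac{B^{2}}{(z+1)^{2}}\,dz^{2}$, while near $\infty$, since $\deg R_{A,B}=2$, $\varpi_{A,B}\sim-\frac{(A+B+2)^{2}}{z^{2}}\,dz^{2}$ in the coordinate $1/z$. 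The three leading coefficients $-A^{2},-B^{2},-(A+B+2)^{2}$ govern the local foliations: a double pole with negative real coefficient is the centre of a ring of closed trajectories (a circle domain), while a non-real coefficient forces trajectories to spiral into the pole. Since $A,B\notin\mathbb{R}$, the poles $\pm1$ are spiral centres, whereas $-(A+B+2)^{2}<0$ — that is, $\infty$ is a circle-domain centre — exactly when $A+B\in\mathbb{R}$ (using $A+B+2\neq0$). At each simple zero $\zeta_{\pm}$ of $R_{A,B}$ three separatrices leave at equal angles.

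Next I would settle the existence and count of short trajectories through a period computation. With the branch of $\sqrt{-R_{A,B}}$ cut along $[\zeta_{-},\zeta_{+}]$, the residues of $\sqrt{\varpi_{A,B}}$ at the double poles are $\pm iA$ at $+1$ and $\pm iB$ at $-1$, the signs being fixed by the location of $\zeta_{\pm}$. A short trajectory in a homotopy class of arcs joining $\zeta_{-}$ to $\zeta_{+}$ exists iff the period $\int_{\zeta_{-}}^{\zeta_{+}}\sqrt{\varpi_{A,B}}$ of that class is real. Expanding the cut contour onto the poles and applying the residue theorem evaluates the base period to the constant $-2\pi$, which is real; hence there is always at least one short trajectory $\gamma_{A,B}$. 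The two arcs running on opposite sides of the pole pair $\{-1,+1\}$ differ by the loop integral $\oint\sqrt{\varpi_{A,B}}=2\pi i(\operatorname{Res}_{1}+\operatorname{Res}_{-1})=\mp2\pi(A+B)$, so the second arc has real period — yielding a second short trajectory — precisely when $A+B\in\mathbb{R}$. This is the same condition as the circle domain at $\infty$, which is the conceptual heart of the dichotomy.

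With the local structure and the short trajectories in hand, I would assemble the global graph. Because $\varpi_{A,B}$ lives on $\hat{\mathbb{C}}$ with only double poles and two simple zeros, the Jenkins basic structure theorem excludes recurrent trajectories, so every separatrix either lands on a zero or tends to a pole. If $A+B\notin\mathbb{R}$ there is no circle domain at $\infty$: one of the three separatrices from $\zeta_{-}$ reaches $\zeta_{+}$ (the short trajectory $\gamma_{A,B}$), and the remaining two at each zero must escape, spiralling into $+1$, $-1$ or $\infty$; tracking which trajectory strip each separatrix enters shows the three destinations are attained and distinct. If $A+B\in\mathbb{R}$, the ring of closed trajectories around $\infty$ is bounded by a critical graph through both zeros; the two real-period classes realise it as two short trajectories joining $\zeta_{-}$ and $\zeta_{+}$ and forming a Jordan curve. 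As this curve bounds the circle domain at $\infty$, its bounded complementary region contains the two poles $\pm1$, so the curve encircles both, and the single remaining separatrix at each zero points inward and spirals into one of $\pm1$.

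The step I expect to be the main obstacle is this last global assembly: converting the reality of a period into a genuinely embedded, non-self-intersecting short trajectory, excluding recurrence rigorously in the presence of spiralling double poles, and proving that exactly the two innermost homotopy classes (and no higher windings) are realised when $A+B\in\mathbb{R}$, so that the short trajectories close up into a simple Jordan curve enclosing both poles rather than a nested or pole-separating configuration. Carefully determining the branch signs of $\sqrt{-R_{A,B}}$ at $\pm1$ from the location of $\zeta_{\pm}$, and confirming the three divergence destinations $+1,-1,\infty$ in the non-real case, are the technical points that make this step delicate.
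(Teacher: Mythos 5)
Your overall strategy is the same as the paper's: read the local structure off the residues $-A^{2},-B^{2},-(A+B+2)^{2}$ at $1,-1,\infty$, compute the periods of the homotopy classes of arcs joining $\zeta_{-}$ and $\zeta_{+}$ by the residue theorem, and observe that the dichotomy is governed by whether the loop contribution $2\pi(A+B)$ is real (equivalently, whether $\infty$ carries a circle domain). Your computations agree with the paper's Propositions \ref{premier} and \ref{second}. However, there is a genuine gap at the central step. You assert that a short trajectory exists in a homotopy class \emph{iff} the corresponding period is real. Only the ``only if'' direction is immediate (it is the paper's Lemma \ref{necessaire}); the converse is precisely the hard part, and it is where the paper does all of its work: it shows via the Teichm\"{u}ller lemma (\ref{Teich}) that two homotopic short trajectories cannot coexist (Lemma \ref{2TRAJ HOM}), and, crucially, that under Property $\mathcal{P}_{a,b,\lambda}$ no two separatrices from the same zero can diverge to the same pole (Proposition \ref{2traj1pole}) and the three poles cannot each absorb two separatrices (Proposition \ref{2traj to each pole}) --- both proved by building explicit non-homotopic arcs from pieces of separatrices and an orthogonal trajectory and showing their periods cannot all satisfy the reality condition. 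Only then does a counting argument on the six separatrices force a short trajectory to exist. You explicitly flag this ``global assembly'' as the main obstacle but supply no argument for it, so as written the existence claim is unproved.

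Two smaller points. First, you track only two of the four homotopy classes of arcs in $\mathbb{C}\setminus\{-1,1\}$ (the classes differing by a loop around the pair $\{-1,1\}$); the classes winding around a single pole have periods proportional to $A+1$ and $B+1$, which must be checked to be non-real (this is where the hypotheses $A,B\notin\mathbb{R}$ enter) in order to get \emph{exactly} one short trajectory when $A+B\notin\mathbb{R}$ and exactly two when $A+B\in\mathbb{R}$. Second, your claim that $\pm1$ are always ``spiral centres'' is inaccurate when $A$ or $B$ is purely imaginary, in which case $-A^{2}$ or $-B^{2}$ is positive and the local structure is radial; this does not change the conclusion but the case should not be dismissed.
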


It was shown in \cite{AMF FT} that the existence of the short trajectory
$\gamma$ joining the zeros $\zeta_{-}$ and $\zeta_{+}$, is the corner stone in
the study of the asymptotic zero distribution of the Jacobi polynomials; it
satisfies
\[
\int_{\gamma}\frac{\left(  \sqrt{R_{A,B}\left(  t\right)  }\right)  _{+}%
}{t^{2}-1}dt=\pm2\pi i.
\]
The short trajectory $\gamma$ is the support of the measure $\mu$ limit in the
weak-* topology of the sequence $\left\{  \mu_{n}\right\}  $ in Theorem
\ref{shapiro+solynin}. Following the pioneering works of Gonchar-Rakhmanov
\cite{Gonchar Rakhmanov}, and Stahl \cite{Stahl}, the weak asymptotic of the
polynomials measure $\mu$ is absolutely continuous with respect to the linear
Lebesgue measure on $\gamma$ and given by the formula
\[
d\mu\left(  z\right)  =\frac{1}{2\pi}\frac{\left(  \sqrt{R_{A,B}\left(
z\right)  }\right)  _{+}}{z^{2}-1}dz.
\]
The strong uniform asymptotic can be tackled via the Riemann-Hilbert steepest
descent method of Deift-Zhou \cite{Deift}.

\section{The quadratic differential $\frac{\lambda^{2}\left(  z-a\right)
\left(  z-b\right)  }{\left(  z^{2}-1\right)  ^{2}}dz^{2}$}

In this section we focus on the quadratic differential on the Riemann sphere
$\widehat{%
\mathbb{C}
}$ defined by
\[
\varpi_{a,b,\lambda}=\frac{\varphi_{a,b,\lambda}\left(  z\right)  }{\left(
z^{2}-1\right)  ^{2}}dz^{2}=\frac{\lambda^{2}\left(  z-a\right)  \left(
z-b\right)  }{\left(  z^{2}-1\right)  ^{2}}dz^{2},
\]
where $a,b,$ and $\lambda$ are three complex numbers such that
\begin{equation}
a\neq b,a,b\notin\left\{  -1,1\right\}  ,\lambda\neq0. \label{cond a b}%
\end{equation}

\bigskip

\begin{definition}
\label{P}We say that Property $\mathcal{P}_{a,b,\lambda}$ is satisfied, if the
imaginary part of at list one of the following four numbers (see Proposition
\ref{premier} )vanishes%
\begin{equation}
\pm\pi i\lambda\left(  \sqrt{\left(  a-1\right)  \left(  b-1\right)  }\pm
\sqrt{\left(  a+1\right)  \left(  b+1\right)  }-2\right)  . \label{im}%
\end{equation}

\end{definition}

The main result of this section is the following

\begin{proposition}
\label{main prop}Let $a,b,$ and $\lambda$ satisfying (\ref{cond a b}). Then,
the quadratic differential $\varpi_{a,b,\lambda}$ has a short trajectory, if
and only if, Property $\mathcal{P}_{a,b,\lambda}$ is satisfied.
\end{proposition}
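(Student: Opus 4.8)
The plan is to reduce everything to the natural parameter $\Phi(z)=\int^{z}\sqrt{\varpi_{a,b,\lambda}}$, whose horizontal trajectories are the level curves $\operatorname{Im}\Phi=\mathrm{const}$, and to read off the existence of a short trajectory from the periods of $\sqrt{\varpi_{a,b,\lambda}}=\lambda\sqrt{(z-a)(z-b)}\,dz/(z^{2}-1)$. The finite critical points are the two simple zeros $a,b$ (three trajectories leaving each at equal angles) and the double poles $\pm1$ and $\infty$. First I would record the residue computation underlying Definition \ref{P} and Proposition \ref{premier}: choosing the branch of $\sqrt{(z-a)(z-b)}$ behaving like $z$ near infinity, the residues of $\sqrt{\varpi_{a,b,\lambda}}$ at $1,-1,\infty$ are $\tfrac{\lambda}{2}\sqrt{(a-1)(b-1)}$, $-\tfrac{\lambda}{2}\sqrt{(a+1)(b+1)}$ and $-\lambda$. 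Passing to the genus-zero double cover $w^{2}=(z-a)(z-b)$, on which $\eta=\lambda w\,dz/(z^{2}-1)$ is meromorphic and anti-invariant under the sheet involution $\iota:w\mapsto-w$, any arc $\gamma$ joining $a$ to $b$ gives a closed cycle $\gamma-\iota\gamma$ with $2\int_{\gamma}\sqrt{\varpi_{a,b,\lambda}}=\oint_{\gamma-\iota\gamma}\eta=2\pi i\sum\operatorname{Res}$. Sorting the finitely many homotopy classes of such arcs in $\widehat{\mathbb{C}}\setminus\{-1,1\}$ by which of the poles lying over $\pm1$ and $\infty$ are enclosed produces exactly the four numbers (\ref{im}); e.g. enclosing the sheets giving $+\sqrt{(a-1)(b-1)}$, $+\sqrt{(a+1)(b+1)}$ and $-\lambda$ yields $\pi i\lambda(\sqrt{(a-1)(b-1)}+\sqrt{(a+1)(b+1)}-2)$. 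This is the content I would invoke from Proposition \ref{premier}.

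Granting this, necessity is immediate. Along a short trajectory $\gamma_{a,b}$ the form $\sqrt{\varpi_{a,b,\lambda}}$ is real by definition of a horizontal trajectory, so $\int_{\gamma_{a,b}}\sqrt{\varpi_{a,b,\lambda}}\in\mathbb{R}$, whence $2\int_{\gamma_{a,b}}\sqrt{\varpi_{a,b,\lambda}}$ is real. By the previous paragraph this quantity is one of the four numbers in (\ref{im}), so that number has vanishing imaginary part and Property $\mathcal{P}_{a,b,\lambda}$ holds.

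For sufficiency I would argue by continuity in $\arg\lambda$. Since replacing $\lambda$ by a positive multiple only rescales $\Phi$, the horizontal foliation depends on $\lambda$ only through $\theta=\arg\lambda$, so I fix $|\lambda|=1$. On $\widehat{\mathbb{C}}$ with only double poles present, the Jenkins--Strebel structure theorem forbids recurrent trajectories, so every trajectory issuing from $a$ limits either on a finite critical point or on one of $\pm1,\infty$, and the sphere decomposes into half-plane, strip and ring domains. Assuming $\mathcal{P}_{a,b,\lambda}$, some homotopy class $\mathcal{H}$ of arcs $a\to b$ has real period $I$. The reduced height of the strip (or half-plane) domain carrying $\mathcal{H}$ is a continuous, real-valued function of $\theta$ equal, up to the factor $\tfrac12$, to the imaginary part of the number in (\ref{im}) attached to $\mathcal{H}$. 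At the value of $\theta$ singled out by $\mathcal{P}_{a,b,\lambda}$ this height vanishes, the two boundary critical trajectories of that domain merge, and the degenerate strip is precisely a horizontal arc joining the zeros $a$ and $b$: the short trajectory $\gamma_{a,b}$. Equivalently, reality of $I$ places $a$ and $b$ on one horizontal line of the $\Phi$-image for the class $\mathcal{H}$, and the straight $\Phi$-segment between them pulls back to the desired trajectory.

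The hard part is exactly this last collapse step: showing that the vanishing of the period's imaginary part genuinely \emph{forces} a saddle connection rather than permitting both critical trajectories from $a$ and $b$ to spiral independently into poles (which the purely local picture at a double pole with non-real residue does not by itself exclude). I would handle it by controlling the global trajectory structure uniformly in $\theta$ via the structure theorem, tracking the bounding trajectories of the domain adjacent to $\mathcal{H}$ as homotopic Jordan arcs and showing that their $\Phi$-separation is continuous and changes sign across the critical $\theta$, so that the intermediate-value principle delivers an actual connection exactly where the reduced height is zero. The case distinction of Theorem \ref{thm:1} should then follow from whether one or two of the four numbers in (\ref{im}) vanish simultaneously, yielding a single short trajectory when $A+B\notin\mathbb{R}$ and two short trajectories forming a Jordan curve when $A+B\in\mathbb{R}$.
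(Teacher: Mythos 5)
Your necessity argument is correct and coincides with the paper's (Lemma \ref{necessaire}, obtained from the residue computation of Proposition \ref{premier}). The problem is the sufficiency direction, and it sits exactly where you flag it: the claim that vanishing of the imaginary part of the period of some homotopy class $\mathcal{H}$ forces the critical trajectories to merge into a saddle connection is asserted, not proved, and the intermediate-value mechanism you propose in $\theta=\arg\lambda$ faces concrete obstacles. The domain decomposition given by the Jenkins--Strebel structure theorem does not vary continuously in $\theta$ (domains appear and disappear precisely at the parameter values where saddle connections form); a strip domain whose two ends are the zeros $a$ and $b$ and which represents the class $\mathcal{H}$ need not exist for nearby $\theta$; and half-plane domains have infinite height, so the identification ``reduced height $=\tfrac12\,\Im$(period)'' is not available in general. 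A priori, reality of one of the four numbers in (\ref{im}) is compatible with all six critical trajectories from $a$ and $b$ spiralling into the double poles (whose residues may be non-real), and ruling this out is the entire content of sufficiency.

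The paper closes this gap by a counting argument rather than a deformation. Assume no short trajectory exists; then all six critical trajectories emanating from $a$ and $b$ diverge to the poles $-1,1,\infty$. Teichm\"{u}ller's lemma (\ref{Teich}), applied to $\varpi_{a,b,\lambda}$-polygons bounded by two such trajectories and an arc of an orthogonal trajectory, shows first that two trajectories from the same zero cannot reach the same pole (Proposition \ref{2traj1pole}), and then that the only remaining configuration, two trajectories diverging to each of the three poles, is also impossible (Proposition \ref{2traj to each pole}). In each excluded configuration one exhibits, in \emph{every} one of the four homotopy classes of arcs from $a$ to $b$, a representative built from trajectory arcs (along which $\Im\int\sqrt{\varpi_{a,b,\lambda}}$ is constant) together with a nontrivial arc of an orthogonal trajectory (along which it is strictly monotone), so all four numbers in (\ref{im}) have nonzero imaginary part, contradicting $\mathcal{P}_{a,b,\lambda}$. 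If you wish to keep your deformation strategy, you must supply an argument of this strength at the critical value of $\theta$ itself; as written, your ``collapse step'' is the theorem.
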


The \emph{horizontal trajectories} (or just trajectories) of $\varpi
_{a,b,\lambda}$ are the loci of the equation
\begin{equation}
\Im\int^{z}\frac{\sqrt{\varphi_{a,b,\lambda}\left(  t\right)  }}{t^{2}%
-1}\,dt\equiv\text{\emph{const}},\text{(}\Re\int^{z}\frac{\sqrt{\varphi
_{a,b,\lambda}\left(  t\right)  }}{t^{2}-1}\,dt\text{ \emph{monotonic})}
\label{re}%
\end{equation}
while the \emph{vertical} or \emph{orthogonal} trajectories are obtained by
replacing $\Im$ by $\Re$ in the equation above. It is easy to check that
equation (\ref{re}) is equivalent to%
\[
\frac{\varphi_{a,b,\lambda}\left(  z\right)  }{\left(  z^{2}-1\right)  ^{2}%
}\,dz^{2}>0.
\]
The trajectories and the orthogonal trajectories of $\varpi_{a,b,\lambda}$
produce a transversal foliation of the Riemann sphere $\widehat{%
\mathbb{C}
}$. The only \textit{critical points} of $\varpi_{a,b,\lambda}$ are its zeros
(the roots $a$ and $b$ of $\varphi_{a,b,\lambda}$) and its poles, located at
$\pm1$ and at infinity; all others points of $%
\mathbb{C}
$ are regular.

A trajectory $\gamma$ of $\varpi_{a,b,\lambda}$ starting and ending at zeros
$a$ and $b$ (if exists) is called \emph{finite critical} or \emph{short}; if
it starts at one of the zeros $a$ or $b$ but tends to either pole, we call it
\emph{infinite critical trajectory} of $\varpi_{a,b,\lambda}$. In particular,
If $\gamma$ is a short trajectory joining $a$ and $b$, then, necessarily
\begin{equation}
\Im\int_{\gamma}\frac{\left(  \sqrt{\varphi_{a,b,\lambda}\left(  t\right)
}\right)  _{+}}{t^{2}-1}dt=0. \label{class homotopy}%
\end{equation}
Notice that any critical trajectory is either finite or infinite; any non
critical trajectory is either a loop, or it must diverge to infinite critical
points in its two directions.

The set of both finite and infinite critical trajectories of $\varpi
_{a,b,\lambda}$ together with their limit points (critical points of
$\varpi_{a,b,\lambda}$) is the \emph{critical graph} $\Gamma_{a,b}$ of
$\varpi_{a,b,\lambda}$. (See \cite{Jenkins}, or \cite{Striebel} for further
details on quadratic differentials)

In order to prove Theorem~\ref{thm:1} we start by analyzing the local
structure of the trajectories of $\varpi_{a,b,\lambda}$ at its critical points
(see e.g.~\cite{Jenkins},\cite{Striebel}). Recall that at any regular point
trajectories look locally as simple analytic arcs passing through this point,
and through every regular point of $\varpi_{a,b,\lambda}$ passes a uniquely
determined horizontal and uniquely determined vertical trajectory of
$\varpi_{a,b,\lambda}$, that are locally orthogonal at this point.
Furthermore, there are $3$ trajectories emanating from $a$ and from $b$ under
equal angles $2\pi/3$. The local structure of the trajectories near a double
pole depends on the vanishing of the real and imaginary parts of the residues
of the quadratic differential near this point.

Since
\begin{align*}
\varpi_{a,b,\lambda}  &  =\left(  \frac{\lambda^{2}\left(  1-a\right)  \left(
1-b\right)  }{\left(  z-1\right)  ^{2}}+\mathcal{O}\left(  \frac{1}%
{z-1}\right)  \right)  dz^{2},\quad z\rightarrow1,\\
\varpi_{a,b,\lambda}  &  =\left(  \frac{\lambda^{2}\left(  1+a\right)  \left(
1+b\right)  }{\left(  z+1\right)  ^{2}}+\mathcal{O}\left(  \frac{1}%
{z+1}\right)  \right)  dz^{2},\quad z\rightarrow-1,\\
\varpi_{a,b,\lambda}  &  =\left(  \frac{\lambda^{2}}{u^{2}}+\mathcal{O}\left(
\frac{1}{u^{3}}\right)  \right)  du^{2},\quad u\rightarrow0,\quad z=1/u,
\end{align*}
we conclude that the residues of $\varpi_{a,b,\lambda}$ at $-1,$ $1,$ and
$\infty$ are respectively $\lambda^{2}\left(  1-a\right)  \left(  1-b\right)
,$ $\lambda^{2}\left(  1+a\right)  \left(  1+b\right)  ,$ and $\lambda^{2}$.
Recall that the local behavior of the trajectories near a double pole has the
circle, the radial, or the log-spiral forms respectively if the residue there
is negative, positive, or non real. The existence of a short trajectory
joining $a$ or $b$ to itself implies that at list, one the residues above is negative.

Since $\varpi_{a,b,\lambda}$ has only three poles, Jenkins' three pole Theorem
asserts that it cannot have any recurrent trajectory.

\bigskip We denote $\mathcal{J}_{a,b}$ the set of all Jordan arcs joining $a$
and $b$ in $%
\mathbb{C}
\setminus\left\{  -1,1\right\}  $. Two arcs $\alpha,\beta:\left[  0,1\right]
\longrightarrow$ $%
\mathbb{C}
\setminus\left\{  -1,1\right\}  $ from $\mathcal{J}_{a,b}$ are homotopic if
there exists a continuous function $H:\left[  0,1\right]  \times$ $\left[
0,1\right]  \longrightarrow$ $%
\mathbb{C}
\setminus\left\{  -1,1\right\}  $ such that%
\[
\left\{
\begin{array}
[c]{c}%
H\left(  t,0\right)  =\alpha\left(  t\right)  \\
H\left(  t,1\right)  =\beta\left(  t\right)
\end{array}
,t\in\left[  0,1\right]  .\right.
\]
It is an equivalence relation on $\mathcal{J}_{a,b}$. It is well known that $%
\mathbb{C}
\setminus\left\{  -1,1\right\}  $ and the wedged two circles have the same
type of homotopy; in particular, there are four classes of equivalence of the
relation "homotopic" on $\mathcal{J}_{a,b}$, see Figure \ref{Fig3}.%
\begin{figure}[h]%
\centering
\fbox{\includegraphics[
height=2.0903in,
width=3.7048in
]%
{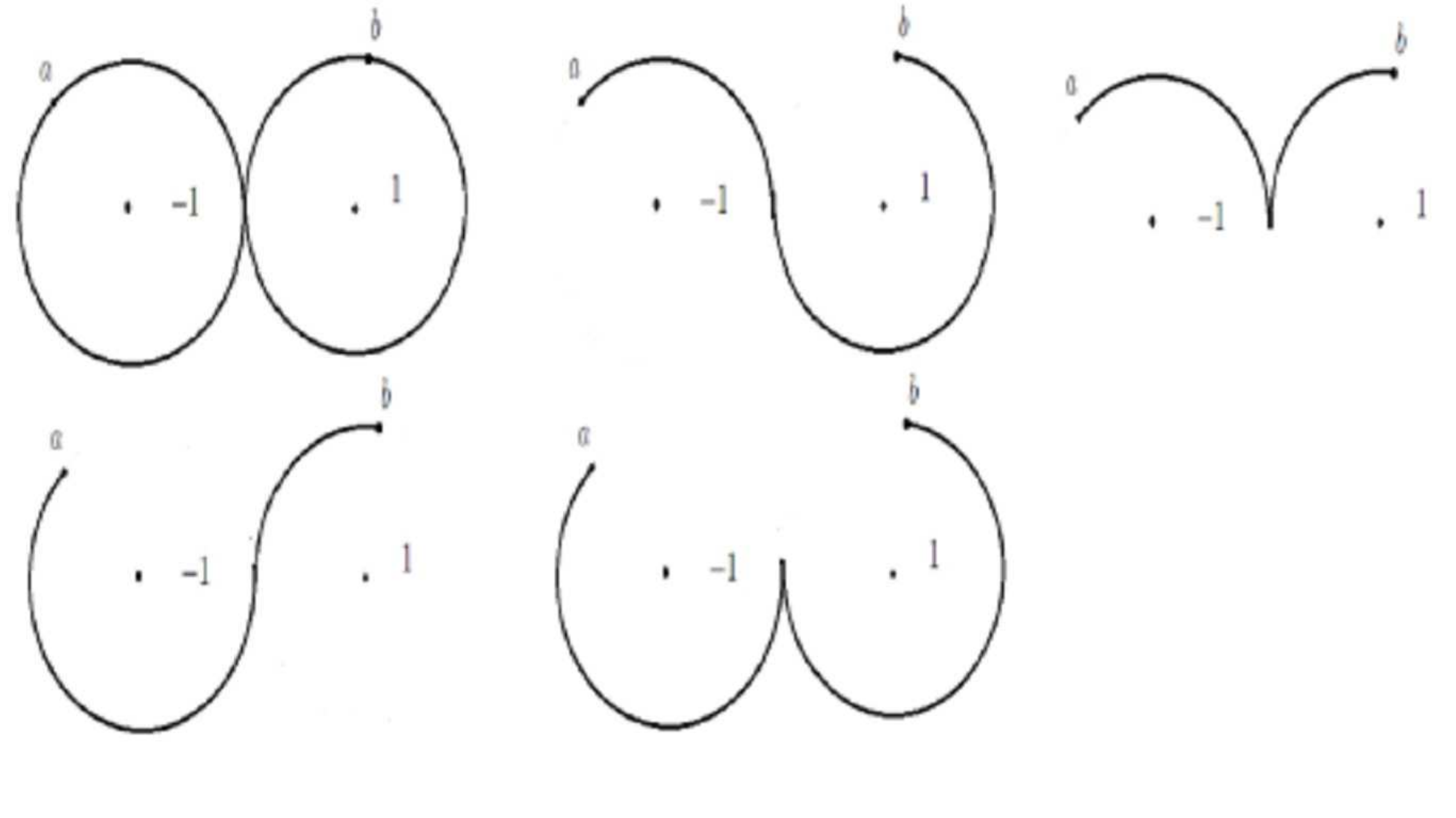}%
}\caption{Wedged circles and the 4 homotopic classes.}%
\label{Fig3}%
\end{figure}

For $\gamma_{0}\in\mathcal{J}_{a,b}$ fixed, we consider the single-valued
branch in $%
\mathbb{C}
\setminus\gamma_{0}$ of $\sqrt{\varphi_{a,b,\lambda}}$ fixed by the condition
\begin{equation}
\sqrt{\varphi_{a,b,\lambda}\left(  z\right)  }\sim\lambda z,z\rightarrow
\infty. \label{asymptInfity}%
\end{equation}
For $t\in\gamma_{0}\setminus\left\{  a,b\right\}  ,$ we denote by $\left(
\sqrt{\varphi_{a,b,\lambda}(t)}\right)  _{+}$ and $\left(  \sqrt
{\varphi_{a,b,\lambda}(t)}\right)  _{-}$ the limits from the $+$-side and
$-$-side respectively. (As usual, the $+$-side of an oriented curve lies to
the left, and the $-$-side lies to the right, if one traverses the curve
according to its orientation). Observe that%
\[
\left(  \sqrt{\varphi_{a,b,\lambda}\left(  t\right)  }\right)  _{+}=-\left(
\sqrt{\varphi_{a,b,\lambda}\left(  t\right)  }\right)  _{-},t\in\gamma
_{0}\setminus\left\{  a,b\right\}  .
\]
We have,

\begin{proposition}
\label{premier} Let $a,b,$ and $\lambda$ satisfy assumptions
(\ref{cond sur Aet B}), and let $\gamma_{0}$ be a Jordan arc in $%
\mathbb{C}
\setminus\{-1,1\}$ joining $a$ and $b$, and $\sqrt{\varphi_{a,b,\lambda}}$ is
its single-valued branch in $%
\mathbb{C}
\setminus\gamma_{0}$ fixed by the condition \ref{asymptInfity}. Then,
\[
\int_{\gamma_{0}}\frac{\left(  \sqrt{\varphi_{a,b,\lambda}\left(  t\right)
}\right)  _{+}}{t^{2}-1}dt=\pm i\pi\frac{\lambda}{2}\left(  \sqrt{\left(
1-a\right)  \left(  1-b\right)  }-\sqrt{\left(  1+a\right)  \left(
1+b\right)  }-2\right)  .
\]

\end{proposition}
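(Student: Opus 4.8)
The plan is to read the integrand as the boundary value of a function that is single-valued and meromorphic off the arc $\gamma_0$, and then trade the integral along $\gamma_0$ for a residue computation on the sphere. Set $f(z)=\dfrac{\sqrt{\varphi_{a,b,\lambda}(z)}}{z^{2}-1}$, where $\sqrt{\varphi_{a,b,\lambda}}$ is the branch fixed by (\ref{asymptInfity}); then $f$ is single-valued and holomorphic on $\mathbb{C}\setminus\left(\gamma_{0}\cup\{-1,1\}\right)$ and, because $\varphi_{a,b,\lambda}$ is regular and nonvanishing at $\pm1$, it has there only \emph{simple} poles. First I would invoke the jump relation $\left(\sqrt{\varphi_{a,b,\lambda}}\right)_{+}=-\left(\sqrt{\varphi_{a,b,\lambda}}\right)_{-}$ on $\gamma_{0}$. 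Letting $C$ be a loop that tightly encircles $\gamma_{0}$ and separates it from $\pm1$ and from $\infty$, the two banks of the cut contribute with the same sign, so that
\[
\oint_{C}f(z)\,dz=\int_{\gamma_{0}}(f)_{+}\,dt-\int_{\gamma_{0}}(f)_{-}\,dt=2\int_{\gamma_{0}}\frac{\left(\sqrt{\varphi_{a,b,\lambda}(t)}\right)_{+}}{t^{2}-1}\,dt,
\]
up to the orientation of $C$, which I would pin down at the end.

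The second step is to evaluate $\oint_{C}f\,dz$ by opening the loop out across the sphere. Since $\gamma_{0}\subset\mathbb{C}\setminus\{-1,1\}$, the poles $\pm1$ lie outside $C$; applying Cauchy's theorem on the region between $C$ and a large circle $|z|=R$ gives
\[
\oint_{C}f\,dz=\oint_{|z|=R}f\,dz-2\pi i\left(\operatorname{Res}_{z=1}f+\operatorname{Res}_{z=-1}f\right).
\]
The normalization (\ref{asymptInfity}) yields $\sqrt{\varphi_{a,b,\lambda}(z)}=\lambda z-\lambda\frac{a+b}{2}+\mathcal{O}(1/z)$, hence $f(z)=\lambda/z+\mathcal{O}(1/z^{2})$ and $\oint_{|z|=R}f\,dz=2\pi i\lambda$. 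The simple-pole residues are read off directly, giving (for the determination of the roots induced by the chosen branch)
\[
\operatorname{Res}_{z=1}f=\frac{\lambda\sqrt{(1-a)(1-b)}}{2},\qquad\operatorname{Res}_{z=-1}f=-\frac{\lambda\sqrt{(1+a)(1+b)}}{2}.
\]
Substituting into the previous display and dividing by $2$ produces $-\dfrac{i\pi\lambda}{2}\left(\sqrt{(1-a)(1-b)}-\sqrt{(1+a)(1+b)}-2\right)$, which is the asserted value for one choice of the global sign.

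I expect the main obstacle to be the branch bookkeeping hidden in the residues. The quantities $\sqrt{(1\mp a)(1\mp b)}$ are not arbitrary square roots but the values at $\pm1$ of the branch of $\sqrt{\varphi_{a,b,\lambda}}/\lambda$ continued from $\infty$ without crossing $\gamma_{0}$; since $\mathbb{C}\setminus\{-1,1\}$ is not simply connected, these continuations---and hence the signs in front of each root---depend on the homotopy class of $\gamma_{0}$ in $\mathcal{J}_{a,b}$. I would therefore have to make each continuation explicit, and at the same time track how the orientation of $\gamma_{0}$, the side on which $(\cdot)_{+}$ is taken, and the orientation of $C$ combine into the overall sign. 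Carrying this out for each admissible configuration is exactly what accounts for the $\pm$ in the statement and, across the homotopy classes, reproduces the four numbers of (\ref{im}); confirming that every such configuration lands on one of these values is the crux of the argument.
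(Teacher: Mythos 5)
Your proposal is correct and follows essentially the same route as the paper: convert the arc integral into half of a closed contour integral around $\gamma_{0}$ via the jump relation, then evaluate that contour integral by the residues at the exterior singularities $\pm1$ and $\infty$ (you make the $\infty$ contribution explicit as the large-circle integral, which the paper packages as $\operatorname{res}_{\infty}$). Your closing remarks on branch bookkeeping correctly identify the source of the $\pm$ ambiguity that the paper likewise leaves implicit.
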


\begin{proof}
Let $\gamma$ be a closed contour encircling the curve $\gamma_{0}$ and not
encircling $z=\pm1$. Thus,
\begin{align*}
\int_{\gamma_{0}}\frac{\left(  \sqrt{\varphi_{a,b,\lambda}\left(  t\right)
}\right)  _{+}}{t^{2}-1}dt  &  =\frac{1}{2}\int_{\gamma_{0}}\left[
\frac{\left(  \sqrt{\varphi_{a,b,\lambda}\left(  t\right)  }\right)  _{+}%
}{t^{2}-1}-\frac{\left(  \sqrt{\varphi_{a,b,\lambda}\left(  t\right)
}\right)  _{-}}{t^{2}-1}\right]  dt\\
&  =\frac{1}{2}\oint_{\gamma}\frac{\sqrt{\varphi_{a,b,\lambda}\left(
t\right)  }}{t^{2}-1}dt\\
&  =\pm i\pi\left(  \underset{-1}{res}+\underset{1}{res}+\underset{\infty
}{res}\right)  \left(  \frac{\sqrt{\varphi_{a,b,\lambda}\left(  t\right)  }%
}{t^{2}-1}\right) \\
&  =\pm i\pi\frac{\lambda}{2}\left(  \sqrt{\varphi_{a,b,\lambda}(1)}%
+\sqrt{\varphi_{a,b,\lambda}(-1)}-2\right) \\
&  =\pm i\pi\frac{\lambda}{2}\left(  \sqrt{\left(  1-a\right)  \left(
1-b\right)  }-\sqrt{\left(  1+a\right)  \left(  1+b\right)  }-2\right)  .
\end{align*}
The values of $\int_{\gamma}\frac{\sqrt{\varphi_{a,b,\lambda}\left(  t\right)
}}{t^{2}-1}dt$ for $\gamma\in\Gamma_{a,b}$ can be deduced by the knowledge of
the homotopic class of $\gamma,$ and applying Cauchy residue Theorem; without
consideration of the signs, these values are those numbers defined in
(\ref{im}).
\end{proof}

\bigskip\ As an immediate consequence is the following

\begin{lemma}
\label{necessaire}If the quadratic differential $\varpi_{a,b,\lambda}$ has a
short trajectory joining $a$ and $b,$ then, Property $\mathcal{P}%
_{a,b,\lambda}$ is satisfied$.$
\end{lemma}

The next tool we need to finish the proof of Theorem~\ref{thm:1} is the
so-called Teichm\"{u}ller lemma (see \cite[Theorem 14.1]{Striebel}), following
the idea already used in \cite{Atia},\cite{AMF FT}. Recall that a
$\varpi_{a,b,\lambda}$-polygon is any domain bounded only by trajectories or
orthogonal trajectories of $\varpi_{a,b,\lambda}$. If $z_{j}$ are its corners,
$n_{j}$ is the multiplicity of $z_{j}$ as a singularity of $\varpi
_{a,b,\lambda}$ (taking $n_{j}=1$ if $z_{j}\in\{a,b\}$, $n_{j}=0$ if it is a
regular point, and $n_{j}=-2$ if it is a double pole), and $\theta_{j}%
\in\left[  0,2\pi\right]  $ is the corresponding inner angle at $z_{j}$, then
\begin{equation}
\sum_{j}\beta_{j}=2+\sum_{i}n_{i},\quad\text{where }\beta_{j}=1-\theta
_{j}\frac{n_{j}+2}{2\pi}, \label{Teich}%
\end{equation}
and the summation in the right hand side goes along all zeroes of
$\varpi_{a,b,\lambda}$ inside the $\varpi_{a,b,\lambda}$-polygon. As an
immediate consequence, we have

\begin{lemma}
\label{2TRAJ HOM}There cannot exist two short trajectories that are homotopic
in the punctured plane $%
\mathbb{C}
\setminus\left\{  -1,1\right\}  .$
\end{lemma}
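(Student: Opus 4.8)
The plan is to argue by contradiction using the Teichm\"{u}ller lemma (\ref{Teich}). Suppose that $\varpi_{a,b,\lambda}$ admits two distinct short trajectories $\gamma_{1}$ and $\gamma_{2}$, each joining the zeros $a$ and $b$, that are homotopic in $\mathbb{C}\setminus\{-1,1\}$. Since distinct trajectories can meet only at critical points, $\gamma_{1}$ and $\gamma_{2}$ intersect solely at $a$ and $b$, so their union is a Jordan curve bounding a domain $\Omega$. The crucial point is that, because $\gamma_{1}$ and $\gamma_{2}$ are homotopic in the punctured plane, the homotopy furnishes a deformation of one arc into the other avoiding $\pm1$; consequently neither $-1$ nor $1$ can lie in $\Omega$. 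As $\Omega$ is bounded entirely by horizontal trajectories of $\varpi_{a,b,\lambda}$ with corners at $a$ and $b$, it is a $\varpi_{a,b,\lambda}$-polygon in the sense required by (\ref{Teich}).

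Next I would apply the Teichm\"{u}ller relation (\ref{Teich}) to $\Omega$. The only critical points of $\varpi_{a,b,\lambda}$ are the simple zeros $a,b$ and the double poles $\pm1,\infty$. By the previous paragraph the poles lie outside $\overline{\Omega}$, while the zeros $a,b$ lie on $\partial\Omega$ rather than in its interior; thus no zero sits inside $\Omega$, and the right-hand side of (\ref{Teich}) equals $2+0=2$. On the left-hand side there are exactly two corners, $z_{1}=a$ and $z_{2}=b$, each a simple zero, so that $n_{1}=n_{2}=1$ and $\beta_{j}=1-\theta_{j}\tfrac{3}{2\pi}$.

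Finally I would pin down the admissible inner angles. Since three trajectories emanate from a simple zero under equal angles $2\pi/3$, and the two boundary arcs of $\Omega$ at each corner leave along two of these three (necessarily distinct) rays, the inner angle $\theta_{j}$ is a positive integer multiple of $2\pi/3$ not exceeding $2\pi$, hence $\theta_{j}\in\{2\pi/3,4\pi/3\}$. This forces $\beta_{j}\in\{0,-1\}$ at each corner, so that $\sum_{j}\beta_{j}=\beta_{a}+\beta_{b}\le 0$, contradicting the value $2$ computed above. I expect the main obstacle to be the rigorous justification that $\Omega$ contains no pole, that is, translating the topological hypothesis ``homotopic in $\mathbb{C}\setminus\{-1,1\}$'' into the geometric statement that the two arcs enclose none of the punctures, together with the care needed to check that the third trajectory issuing from each zero leaves $\Omega$ rather than entering it (which would create an additional corner or reach an interior pole); this is what guarantees that $\partial\Omega$ consists only of the two given short trajectories and that the angle count is exactly as above.
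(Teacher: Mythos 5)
Your proposal is correct and follows essentially the same route as the paper: form the Jordan curve from the two short trajectories, use the homotopy hypothesis to exclude the punctures $\pm1$ from the bounded domain, and apply the Teichm\"{u}ller relation (\ref{Teich}) to get $\sum_j\beta_j\le 0$ against a right-hand side of $2$. You are in fact slightly more careful than the paper, which asserts the inner angles equal $\frac{2\pi}{3}$ outright, whereas you also rule out the $\frac{4\pi}{3}$ alternative; this is a refinement, not a different argument.
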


\begin{proof}
If such two short trajectories exist, then they will form an $\varpi
_{a,b,\lambda}$-polygon splitting $\widehat{%
\mathbb{C}
}$ into two connected domains; let $D$ be the bounded one. Clearly,
$D\cap\left\{  -1,1\right\}  =\emptyset,$ and then, the interior angles of
this $\varpi_{a,b,\lambda}$-polygon equal $\frac{2\pi}{3},$ therefore, the
left-hand side of (\ref{Teich}) equals $0$, whereas the right-hand is $2$, a contradiction.
\end{proof}

\begin{proposition}
\label{2traj1pole}Suppose that Property $\mathcal{P}_{a,b,\lambda}$ is
satisfied$.$ Then, there cannot exist two infinite critical trajectories
emanating from the same zero $a$ or $b$, and diverging to the same pole.
\end{proposition}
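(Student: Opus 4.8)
The plan is to argue by contradiction using the Teichm\"uller lemma (\ref{Teich}), exactly in the spirit of the proof of Lemma \ref{2TRAJ HOM}. Suppose, without loss of generality, that two infinite critical trajectories $\gamma_1,\gamma_2$ emanate from the zero $a$ and both diverge to the same pole $p\in\{-1,1,\infty\}$. Since two distinct trajectories of $\varpi_{a,b,\lambda}$ can meet only at critical points, and the only critical points lying on $\gamma_1$ and $\gamma_2$ are their common endpoints $a$ and $p$, the union $\gamma_1\cup\gamma_2$ is a Jordan curve on $\widehat{\mathbb{C}}$. It therefore divides the sphere into two domains; I let $D$ be the one that does \emph{not} contain the second zero $b$. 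By construction $D$ is a $\varpi_{a,b,\lambda}$-polygon whose only corners are $a$ and $p$, and whose interior contains no zero of $\varphi_{a,b,\lambda}$: the zeros are exactly $a$ and $b$, the former is a boundary corner and the latter lies outside $D$, while no boundary trajectory can pass through $b$, since a critical trajectory must terminate at a zero.

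Next I would carry out the angle bookkeeping in (\ref{Teich}). At the corner $a$ we have $n_a=1$; because the three trajectories issuing from $a$ do so under equal angles $2\pi/3$, the arcs $\gamma_1,\gamma_2$ are two of these three prongs, so the interior angle of $D$ at $a$ satisfies $\theta_a\in\{2\pi/3,\,4\pi/3\}$, whence $\beta_a=1-\theta_a\frac{3}{2\pi}\le 0$. At the corner $p$ we have $n_p=-2$, so $\beta_p=1-\theta_p\frac{-2+2}{2\pi}=1$ \emph{irrespective} of the angle $\theta_p$. Summing, the left-hand side of (\ref{Teich}) is $\beta_a+\beta_p\le 1$, while the right-hand side equals $2+\sum_i n_i=2$ because no zero lies inside $D$. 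This contradiction rules out the configuration.

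The main obstacle is the legitimacy of treating $p$ as an honest polygon vertex with $\beta_p=1$, which rests on the local structure of $\varpi_{a,b,\lambda}$ at the double pole $p$. If the residue at $p$ is negative (the circle case) no trajectory reaches $p$ at all, so the configuration is vacuous; if it is positive (the radial case) the two arcs approach $p$ with a well-defined finite opening and the computation above is immediate, $\beta_p=1$ being the combinatorial contribution of a second-order pole. The delicate case is a non-real residue (the log-spiral case), where $\gamma_1$ and $\gamma_2$ wind infinitely often around $p$ and $\partial D$ is not literally a finite-sided polygon. Here I would invoke the standing hypothesis that Property $\mathcal{P}_{a,b,\lambda}$ holds, together with the local normal form $\int^{z}\sqrt{\varpi_{a,b,\lambda}}\sim\sqrt{\mathrm{res}_p}\,\log(z-p)$, to truncate the two spiralling ends by a crosscut along an orthogonal trajectory: this replaces $p$ by two regular right-angle corners whose contributions $\tfrac12+\tfrac12=1$ reproduce $\beta_p=1$, so that the finite polygon so obtained again violates (\ref{Teich}). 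Verifying that such a crosscut closes the region after finitely many sides, and that the spiralling ends cannot trap an additional critical point, is the step requiring the quantitative input of $\mathcal{P}_{a,b,\lambda}$, and is where I expect the real work to lie.
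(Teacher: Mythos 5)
There is a genuine gap, and it sits in the central step: the claim that the right-hand side of (\ref{Teich}) equals $2$ for your domain $D$. In Teichm\"{u}ller's lemma the sum $\sum_i n_i$ runs over \emph{all} critical points enclosed by the $\varpi_{a,b,\lambda}$-polygon, poles included, each double pole contributing $-2$; the paper's phrase ``all zeroes'' is a misstatement, as you can check from the paper's own use of the formula inside this very proof, where the right-hand side is computed to be $1$ or $0$ --- values only attainable by counting an enclosed double pole. Your $D$ is one of the two complementary components on $\widehat{\mathbb{C}}$ of the Jordan curve $\gamma_1\cup\gamma_2\cup\{a,p\}$, and the two poles other than $p$ are distributed between $D$ and its complement; nothing prevents $D$ from containing one of them. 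In that case the right-hand side is $0$, which exactly matches your left-hand side $\beta_a+\beta_p=-1+1=0$ when $\theta_a=4\pi/3$. So the identity is perfectly consistent with the configuration (applying it to the other component, which contains $b$, with angle $2\pi/3$ at $a$ and one enclosed pole, gives $1=1$), and no contradiction arises. A warning sign is that your argument makes no essential use of the hypothesis that Property $\mathcal{P}_{a,b,\lambda}$ holds, apart from a vague appeal in the spiral case; the proposition is not a purely combinatorial fact about quadratic differentials, and cannot follow from angle bookkeeping alone.

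The paper uses (\ref{Teich}) only to extract the structural information above: after truncating the spiralling ends by a crosscut $\sigma$ along an orthogonal trajectory (so the corners at the crosscut contribute $\tfrac12+\tfrac12$), the identity forces either $\theta=2\pi/3$ with $b$ and one pole inside, or $\theta=4\pi/3$ with exactly one pole inside, and this pins down where the remaining critical trajectories of $a$ and $b$ must go. The contradiction then comes from Property $\mathcal{P}_{a,b,\lambda}$: concatenating pieces of the critical trajectories with pieces of $\sigma$, one builds four arcs from $a$ to $b$ lying in the four distinct homotopy classes of $\mathcal{J}_{a,b}$; along each, the horizontal-trajectory pieces contribute nothing to $\Im\int\frac{(\sqrt{\varphi_{a,b,\lambda}(t)})_+}{t^2-1}\,dt$ while the orthogonal-trajectory piece contributes a nonzero imaginary part, so all four periods have nonvanishing imaginary part. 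By Proposition \ref{premier} these periods are precisely the four numbers (\ref{im}), so Property $\mathcal{P}_{a,b,\lambda}$ fails. This second half is the real content of the proof and is entirely missing from your proposal.
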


\begin{proof}
Assume that $\gamma^{-}$ and $\delta^{-}$ are two infinite critical
trajectories emanating from the same zero (for example $a$ ), spacing with
angle $\theta_{-}$, diverging to the same pole, for example, $z=-1$. We treat
the case when the residue $\lambda^{2}\left(  1+a\right)  \left(  1+b\right)
$ of the quadratic differential $\varpi_{a,b,\lambda}$ at the pole $-1$ is not
real. Let $\sigma$ be an orthogonal trajectory (not necessary critical)
diverging to $z=-1.$ Clearly, $\sigma$ intersects $\gamma^{-}$ and $\delta
^{-}$ alternatively infinitely many times; let $A$ and $B$ be two consecutive
intersections. Let $\Omega$ be an $\varpi_{a,b,\lambda}$-polygon $D,$ with
vertices $a,$ $A,$ and $B,$ and with edges, the arcs of $\gamma^{-}$,
$\delta^{-}$ and $\sigma$ connecting respectively $a$ and $A,$ $A$ and $B,$
and, $B$ and $a.$The interior angles of $\Omega$ at $A$ and $B$ are equal to
$\frac{\pi}{2}.$ Direct calculation shows that for $\Omega,$ the right hand
side of (\ref{Teich}) equals $1$ if $\theta_{-}=\frac{2\pi}{3},$ or $0$ if
$\theta_{-}=\frac{4\pi}{3};$ it follows that :

If $\theta_{-}=\frac{2\pi}{3}$, then $\Omega$ must contain $b$ and another
pole, necessarily, $z=1$. We conclude that the third trajectory emanating from
$a$ diverges to $\infty,$ and that all trajectories emanating from the other
zero $b$ stay inside $D;$ the same reasoning applied on trajectories emanating
from $b$ shows that two of them, say $\gamma_{1}^{+}$ and $\gamma_{2}^{+},$
diverge to $-1$ (with angle $\frac{4\pi}{3}$ at $b$ ), and the third one
diverges alone to $1.$ We may assume without loss of generalities, that
$\sigma$ intersects $\gamma^{-},$ $\gamma_{1}^{+},$ $\gamma_{2}^{+},$
$\delta^{-},$ and again $\gamma^{-},$ respectively in $A,$ $B,$ $C,$ $D,$ and
$E.$ We construct $4$ paths connecting $a$ and $b$ as follows (see Figure
\ref{Fig4}) :

\begin{itemize}
\item $\gamma_{0}:$ formed by the part of $\gamma_{1}^{+}$ joining $b$ to $B,$
the part of $\sigma$ joining $B$ to $A,$ and the part of $\gamma_{-}$ joining
$A$ to $a.$

\item $\gamma_{-1}:$ formed by the part of $\gamma_{1}^{+}$ joining $b$ to
$B,$ the part of $\sigma$ joining $B$ to $E,$ and the part of $\gamma_{-}$
joining $E$ to $a.$

\item $\gamma_{1}:$ formed by the part of $\gamma_{2}^{+}$ joining $b$ to $C,$
the part of $\sigma$ joining $C$ to $A,$ and the part of $\gamma_{-}$ joining
$A$ to $a.$

\item $\gamma_{\pm1}:$ formed by the part of $\gamma_{2}^{+}$ joining $b$ to
$C,$ the part of $\sigma$ joining $C$ to $E,$ and the part of $\gamma_{-}$
joining $A$ to $a.$
\end{itemize}

Clearly, these paths are not homotopic in $%
\mathbb{C}
\setminus\left\{  -1,1\right\}  ,$ and we have, by definition of trajectories
and orthogonal trajectories%
\begin{align*}
\Im\int_{\gamma_{0}}\frac{\left(  \sqrt{\varphi_{a,b,\lambda}\left(  t\right)
}\right)  _{+}}{t^{2}-1}dt &  =\Im\int_{b}^{B}\frac{\left(  \sqrt
{\varphi_{a,b,\lambda}\left(  t\right)  }\right)  _{+}}{t^{2}-1}dt\\
&  +\Im\int_{B}^{A}\frac{\left(  \sqrt{\varphi_{a,b,\lambda}\left(  t\right)
}\right)  _{+}}{t^{2}-1}dt\\
&  +\Im\int_{A}^{a}\frac{\left(  \sqrt{\varphi_{a,b,\lambda}\left(  t\right)
}\right)  _{+}}{t^{2}-1}dt\\
&  =\Im\int_{B}^{A}\frac{\left(  \sqrt{\varphi_{a,b,\lambda}\left(  t\right)
}\right)  _{+}}{t^{2}-1}dt\\
&  \neq0.
\end{align*}
With the same way, we prove that the imaginary part of the integral of
$\frac{\left(  \sqrt{\varphi_{a,b,\lambda}}\right)  _{+}\left(  z\right)
}{z^{2}-1}$ along each one of the paths $\gamma_{1},\gamma_{-1},$ and
$\gamma_{\pm1}$ cannot vanish, which contradicts Proposition \ref{premier}.%
\begin{figure}[h]%
\centering
\fbox{\includegraphics[
height=2.0903in,
width=3.7048in
]%
{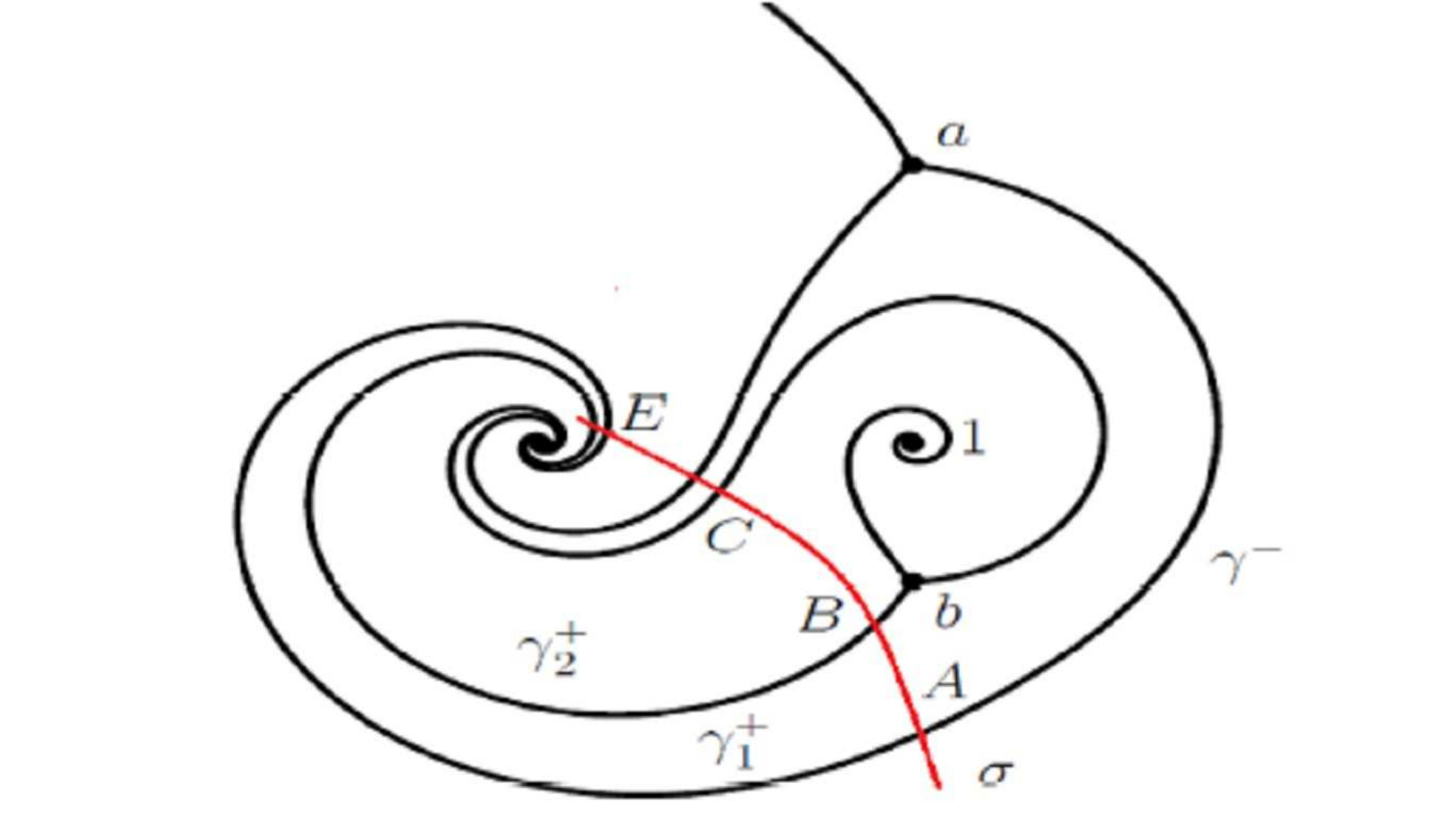}%
}\caption{Construction of 4 paths not homotopic in $\mathbb{C} \backslash
\left\{  -1,1\right\}  $. Here, $a=1-i$, $b=1+2i,$ and $\lambda=1+0.5i.$}%
\label{Fig4}%
\end{figure}

If $\theta_{-}=\frac{4\pi}{3},$ then the right hand side of \ref{Teich} equals
$0,$ and then, $\Omega$ must contain only the pole $1.$ We conclude that the
third trajectory emanating from $a$ diverges to $1$, and no trajectory
emanating from $b$ diverges to $1.$ The same reasoning applied on trajectories
emanating from $b$ shows that at list, one of them diverges to $-1.$ By
changing the roles of $b$ and $a$ in the previous case, we get the same contradiction.

The case $\lambda^{2}\left(  1+a\right)  \left(  1+b\right)  >0$ can be
treated in the same vein.
\end{proof}

\begin{proposition}
\label{2traj to each pole}If Property $\mathcal{P}_{a,b,\lambda}$ is
satisfied, then it cannot happen that to each pole, $+1,-1,$ and $\infty,$
there diverge two infinite critical trajectories.
\end{proposition}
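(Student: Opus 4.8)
The plan is to turn the hypothesis into a rigid combinatorial picture and then contradict Property $\mathcal{P}_{a,b,\lambda}$ by exhibiting, in each of the four homotopy classes of $\mathcal{J}_{a,b}$, a representative along which the imaginary part of $\int\frac{(\sqrt{\varphi_{a,b,\lambda}})_{+}}{t^{2}-1}\,dt$ is manifestly nonzero. First I would note that ``two infinite critical trajectories diverging to each of the three poles'' accounts for exactly $6$ trajectory-ends, and there are exactly $6$ available ($3$ from $a$ and $3$ from $b$ under angles $2\pi/3$). Hence all ends are used by the poles: there is no short trajectory and no self-loop at a zero. By Proposition~\ref{2traj1pole} the two trajectories reaching any one pole cannot emanate from the same zero, so each pole receives one trajectory from $a$ and one from $b$; equivalently, each zero sends exactly one trajectory to each pole. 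The critical graph is therefore combinatorially the complete bipartite graph on $\{a,b\}$ and $\{+1,-1,\infty\}$, and Euler's formula ($V=5$, $E=6$) cuts $\widehat{\mathbb{C}}$ into $F=3$ faces, each a simply connected strip domain with two of the poles at its ends and $a,b$ as opposite corners.

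Next I would record the local picture. Since two critical trajectories actually reach each pole, none of the residues $\lambda^{2}(1\mp a)(1\mp b)$ (at $\pm1$) or $\lambda^{2}$ (at $\infty$) is a negative real number, for a double pole with negative residue has the circular pattern and receives no critical trajectory. Thus near each pole the orthogonal trajectories either circle (positive residue) or spiral (non-real residue), and in either case an orthogonal trajectory meets the two incoming critical trajectories transversally, as often as we wish. This is exactly the crossing mechanism already used in Proposition~\ref{2traj1pole}.

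The core step is to assemble four Jordan arcs $\gamma^{(1)},\dots,\gamma^{(4)}\in\mathcal{J}_{a,b}$, pairwise non-homotopic in $\mathbb{C}\setminus\{-1,1\}$ and together exhausting the four classes, each built from arcs of the critical trajectories joining $a$ and $b$ to the poles together with short connecting arcs of orthogonal trajectories taken near the poles. Along a critical-trajectory arc the primitive $\int^{z}\frac{\sqrt{\varphi_{a,b,\lambda}}}{t^{2}-1}\,dt$ keeps constant imaginary part, so such pieces contribute $0$; along an orthogonal-trajectory arc the real part is constant while the imaginary part is strictly monotonic, so each connecting piece contributes a nonzero imaginary part. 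Consequently $\Im\int_{\gamma^{(k)}}\frac{(\sqrt{\varphi_{a,b,\lambda}})_{+}}{t^{2}-1}\,dt\neq0$ for every $k$. By Proposition~\ref{premier} this integral depends only on the homotopy class and runs over the four numbers in (\ref{im}) as $k$ varies; hence all four numbers have nonzero imaginary part, contradicting Property~$\mathcal{P}_{a,b,\lambda}$, which demands that at least one of them be real.

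The main obstacle is precisely this construction: producing four representatives that are genuinely pairwise non-homotopic and cover all four classes. Three of them arise naturally, one per face, by cutting across a face near one of its two boundary poles; because each face is a simply connected strip (its two poles being only boundary corners), any two such cross-cuts in the same face are homotopic, so each face yields exactly one class, and these three classes are distinct because the loops formed by pairing two face-arcs encircle distinct single poles. The fourth, missing class must come from a connecting arc that winds once around a single pole, realized by a full circular (or spiral) orthogonal loop around that pole: this changes the class by a generator of $\pi_{1}(\mathbb{C}\setminus\{-1,1\})$ while still contributing a nonzero imaginary part. Checking that the resulting four arcs are pairwise non-homotopic—equivalently that their encircling contours capture the four distinct sign patterns of the residues at $\pm1$—is the delicate bookkeeping on which the argument turns; everything else follows from the local normal forms at the poles established above.
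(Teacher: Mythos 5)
Your proposal is correct and follows essentially the same route as the paper: both arguments produce four pairwise non-homotopic arcs in $\mathcal{J}_{a,b}$ built from pieces of critical trajectories (contributing zero imaginary part) glued by cross-cuts of orthogonal trajectories near the poles (each contributing a nonzero imaginary part), so that all four values in (\ref{im}) have nonzero imaginary part, contradicting Property $\mathcal{P}_{a,b,\lambda}$ via Proposition \ref{premier}. Your preamble pinning down the combinatorics (Proposition \ref{2traj1pole} forcing the bipartite incidence and Euler's formula giving the three strip faces) is a slightly more explicit justification of the three-domain picture the paper simply reads off its Figure \ref{Fig5}, and your honest flagging of the remaining bookkeeping (that the three face cross-cuts plus the extra winding arc really exhaust the four homotopy classes) matches the step the paper also treats only briefly.
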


\begin{proof}
If such a situation happens, then the critical graph $\Gamma_{a,b}$ splits $%
\mathbb{C}
$ into three connected domains of the strip types $\Omega_{0},\Omega_{1},$ and
$\Omega_{2};$ see Figure \ref{Fig5}. Let $\gamma_{0}$,$\gamma_{1},$ and
$\gamma_{-1}$ be three Jordan arc joining $a$ and $b$ respectively in the
domains $\Omega_{0},\Omega_{1},$ and $\Omega_{2}.$ We define the square root
$\sqrt{\varphi_{a,b,\lambda}}$ in $%
\mathbb{C}
\setminus\gamma_{0}$ with condition \ref{asymptInfity}. Clearly, the paths
$\gamma_{0}$,$\gamma_{1},$ and $\gamma_{-1}$ belong to three different classes
of homotopy in $%
\mathbb{C}
\setminus\left\{  -1,1\right\}  .$ Consider an orthogonal trajectory (not
necessary critical) $\sigma_{1}$ diverging to $z=1.$ With the same way of the
proof of Proposition \ref{2traj1pole}, we construct two paths connecting $a$
and $b,$ that are respectively homotopic in $%
\mathbb{C}
\setminus\left\{  -1,1\right\}  $ to $\gamma_{0}$ and $\gamma_{1},$ and we
get
\[
\Im\int_{\gamma_{0}}\frac{\left(  \sqrt{\varphi_{a,b,\lambda}\left(  t\right)
}\right)  _{+}}{t^{2}-1}dt\neq0,\text{ }\Im\int_{\gamma_{1}}\frac
{\sqrt{\varphi_{a,b,\lambda}\left(  t\right)  }}{t^{2}-1}dt\neq0.
\]
With another orthogonal trajectory $\sigma_{-1}$ diverging to $z=-1,$ we get
\[
\Im\int_{\gamma_{2}}\frac{\sqrt{\varphi_{a,b,\lambda}\left(  t\right)  }%
}{t^{2}-1}dt\neq0.
\]
Let us denote by $\gamma_{a}$ and $\gamma_{b}$ the critical trajectories that
emanate respectively from $a$ and $b,$ and diverge respectively to $-1$ and
$\infty.$ Let $\sigma$ be an orthogonal trajectory that diverges to $-1 $ and
$\infty.$ $\sigma$ intersects $\gamma_{a}$ and $\gamma_{b}$ infinitely many
times; let $A,B,$ and $C$ be respectively its first and second intersection
with $\gamma_{a},$ and its first intersection with $\gamma_{b}$. We consider
the paths $\gamma$ and $\gamma^{\prime}$

\begin{itemize}
\item $\gamma:$ formed by the part of $\gamma_{a}$ joining $a$ to $B,$ the
part of $\sigma$ joining $B$ to $A,$ and the part of $\gamma_{b}$ joining $A$
to $b.$

\item $\gamma^{\prime}:$ formed by the part of $\gamma_{a}$ joining $a$ to
$C,$ the part of $\sigma$ joining $C$ to $A,$ and the part of $\gamma_{b}$
joining $A$ to $b.$
\end{itemize}

Clearly, $\gamma$ and $\gamma^{\prime}$ are not homotopic in $%
\mathbb{C}
\setminus\left\{  -1,1\right\}  ,$ moreover, one of them, we denote it by
$\gamma_{3}$ is not homotopic in $%
\mathbb{C}
\setminus\left\{  -1,1\right\}  $ to $\gamma_{0},\gamma_{1},$ and $\gamma
_{2};$ besides, we have
\[
\Im\int_{\gamma_{3}}\frac{\sqrt{\varphi_{a,b,\lambda}\left(  t\right)  }%
}{t^{2}-1}dt\neq0.
\]
Finally, each one of the paths $\gamma_{0},\gamma_{1},\gamma_{2},$ and
$\gamma_{3}$ belong to different homotopy class in $%
\mathbb{C}
\setminus\left\{  -1,1\right\}  ,$ and Property $\mathcal{P}_{a,b,\lambda}$ is
violated.
\begin{figure}[h]%
\centering
\fbox{\includegraphics[
height=2.0903in,
width=3.7048in
]%
{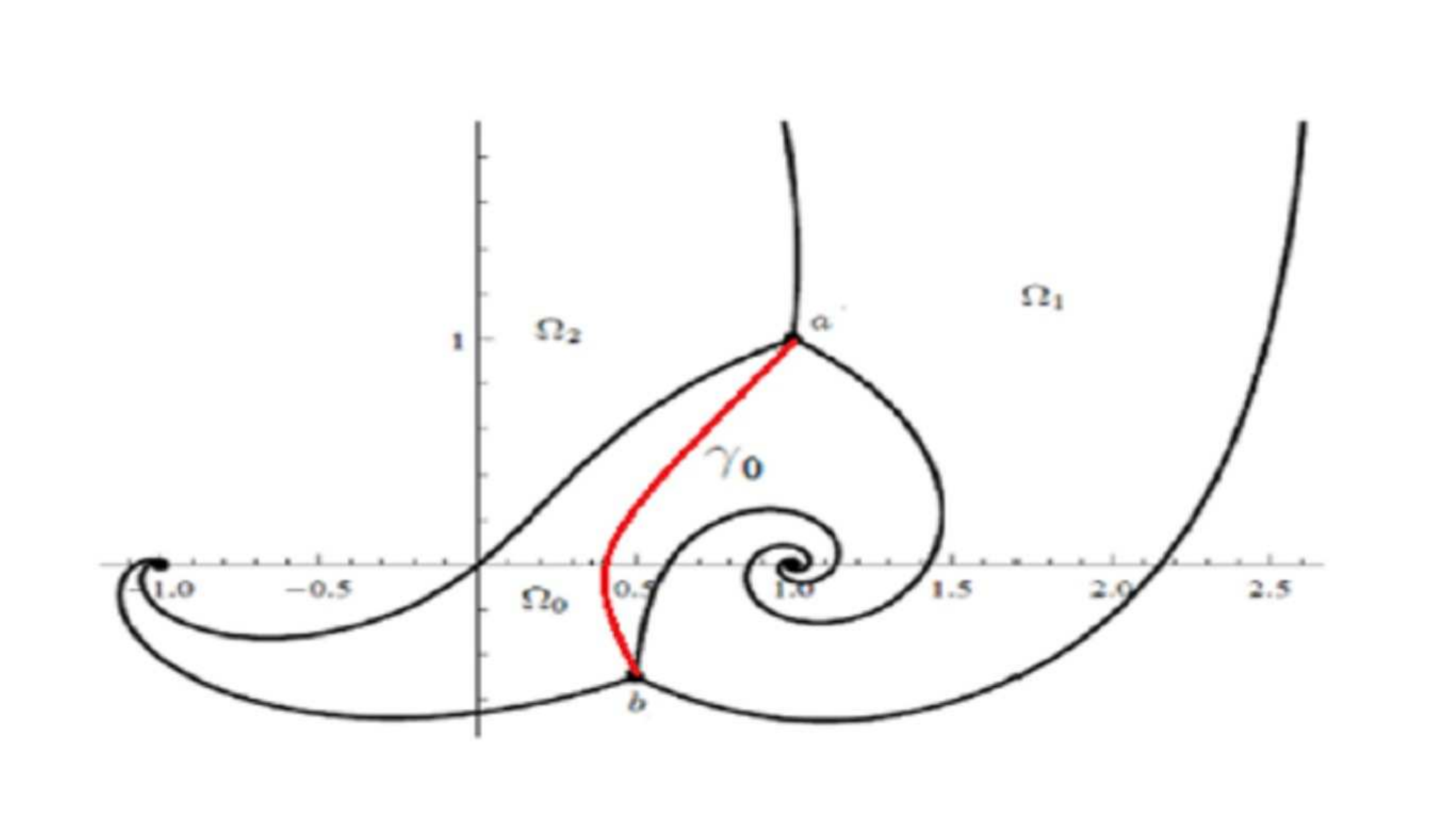}%
}\caption{The path $\gamma_{0}$ (red), and the domains $\Omega_{0},\Omega
_{1},$ and $\Omega_{2}$; here, $a=1+i,b=$ $0.5-0.5i$, and $\lambda=0.1+i.$}%
\label{Fig5}%
\end{figure}

\end{proof}

\begin{proof}
[Proof of Proposition \ref{main prop}]The necessary condition is done in Lemma
\ref{necessaire}.

From each zero $a$ and $b$ there emanate three critical trajectories; if
Property $\mathcal{P}_{a,b,\lambda}$ is satisfied, then, by Propositions
\ref{2traj1pole}, and \ref{2traj to each pole}, these six critical
trajectories cannot diverge all to the poles $-1,1,$ and $\infty$. We conclude
that necessarily we have at list one short trajectory. If Property
$\mathcal{P}_{a,b,\lambda}$ is satisfied for exactly one value from the four
possible values that can take, then, Lemma \ref{2TRAJ HOM} insure the
uniqueness of the short trajectory.
\end{proof}

\section{Proof of Theorem \ref{thm:1}}

\bigskip\ The zeros of $R_{A,B}$ defined in theorem \ref{shapiro+solynin} are
\begin{equation}
\zeta_{\pm}=\dfrac{-A^{2}+B^{2}\pm4\sqrt{\left(  A+1\right)  \left(
B+1\right)  \left(  A+B+1\right)  }}{\left(  A+B+2\right)  ^{2}},
\label{zeros}%
\end{equation}
respectively, in a way that
\[
R_{A,B}\left(  z\right)  =\left(  A+B+2\right)  ^{2}\left(  z-\zeta
_{+}\right)  \left(  z-\zeta_{-}\right)  .
\]
Since $R_{A,B}\left(  -1\right)  =4B^{2}$ and $R_{A,B}\left(  1\right)
=4A^{2},$ it is obvious that for $A$ and $B$ satisfying (\ref{cond sur Aet B}%
), $\ \zeta_{-}$ and $\zeta_{+}$ are simple and different from $\pm1$. The
quadratic differential $\varpi_{A,B}$ can be written $\varpi_{A,B}%
=\varpi_{\zeta_{-},\zeta_{+},\lambda}$ with $\lambda=i\left(  A+B+2\right)  .$
The residues of $\varpi_{\zeta_{-},\zeta_{+},\lambda}$ at the poles $1,-1,$
and $\infty$ are respectively $-A^{2},-B^{2},$ and $-(A+B+2)^{2};$ we conclude
that the trajectories have the radial or the log-spiral form in a neighborhood
of $\pm1$, respectively if $A,B\in i%
\mathbb{R}
$ or $A^{2},B^{2}$ $\notin%
\mathbb{R}
;$ and the circular, the radial, or the log-spiral form at $\infty,$
respectively if $\left(  A+B+2\right)  \in%
\mathbb{R}
,$ or $\left(  A+B+2\right)  \in i%
\mathbb{R}
,$ or $\left(  A+B+2\right)  \notin%
\mathbb{R}
$.

Straightforward calculations shows that%

\begin{align*}
\left(  A+B+2\right)  \sqrt{\left(  \zeta_{+}-1\right)  \left(  \zeta
_{-}-1\right)  }  &  =\allowbreak\pm2A,\\
\left(  A+B+2\right)  \sqrt{\left(  \zeta_{+}+1\right)  \left(  \zeta
_{-}+1\right)  }  &  =\allowbreak\pm2B,
\end{align*}
and we have then, from Proposition \ref{premier}

\begin{proposition}
\label{second}Let $A,B$ satisfy assumptions (\ref{cond sur Aet B}), and let
$\gamma$ be a Jordan arc in $%
\mathbb{C}
\setminus\{-1,1\}$ joining $\ \zeta_{-}$ and $\zeta_{+}$, and $\sqrt{R_{A,B}}$
is its single-valued branch in $%
\mathbb{C}
\setminus\gamma$ fixed by the condition \ref{asymptInfity}. Then
\[
\int_{\gamma}\frac{\left(  \sqrt{R_{A,B}\left(  t\right)  }\right)  _{+}%
}{t^{2}-1}dt\in\pm2\pi i\left\{  1,\left(  A+1\right)  ,\left(  B+1\right)
,\left(  A+B+1\right)  \right\}  .
\]

\end{proposition}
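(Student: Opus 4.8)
The plan is to obtain Proposition \ref{second} as a direct specialization of Proposition \ref{premier}, exploiting the identification $\varpi_{A,B}=\varpi_{\zeta_{-},\zeta_{+},\lambda}$ with $\lambda=i\left(A+B+2\right)$ recorded just above. First I would settle the relation between the two square roots appearing in the two statements. Since $\varphi_{a,b,\lambda}\left(z\right)=\lambda^{2}\left(z-\zeta_{-}\right)\left(z-\zeta_{+}\right)=-\left(A+B+2\right)^{2}\left(z-\zeta_{-}\right)\left(z-\zeta_{+}\right)=-R_{A,B}\left(z\right)$, the branch of $\sqrt{\varphi_{a,b,\lambda}}$ normalized by \ref{asymptInfity}, namely $\sqrt{\varphi_{a,b,\lambda}}\sim i\left(A+B+2\right)z$ at $\infty$, is exactly $i$ times the branch of $\sqrt{R_{A,B}}$ normalized by $\sqrt{R_{A,B}}\sim\left(A+B+2\right)z$. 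Hence $\left(\sqrt{\varphi_{a,b,\lambda}}\right)_{+}=i\left(\sqrt{R_{A,B}}\right)_{+}$ along $\gamma$, so that the integral in Proposition \ref{second} is $-i$ times the integral in Proposition \ref{premier}.

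Next I would feed the two straightforward calculations into the right-hand side of Proposition \ref{premier}. Writing $a=\zeta_{-}$, $b=\zeta_{+}$ and using $\left(1-\zeta_{-}\right)\left(1-\zeta_{+}\right)=\left(\zeta_{-}-1\right)\left(\zeta_{+}-1\right)$, these identities read $\sqrt{\left(1-a\right)\left(1-b\right)}=\varepsilon_{1}\frac{2A}{A+B+2}$ and $\sqrt{\left(1+a\right)\left(1+b\right)}=\varepsilon_{2}\frac{2B}{A+B+2}$ for suitable signs $\varepsilon_{1},\varepsilon_{2}\in\left\{+1,-1\right\}$. Substituting into Proposition \ref{premier} and multiplying by $-i$ to pass to $\sqrt{R_{A,B}}$, the factor $\lambda=i\left(A+B+2\right)$ cancels the denominator $A+B+2$, leaving
\[
\int_{\gamma}\frac{\left(\sqrt{R_{A,B}\left(t\right)}\right)_{+}}{t^{2}-1}\,dt=\pm\pi i\left(\varepsilon_{1}A-\varepsilon_{2}B-\left(A+B+2\right)\right).
\]

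Finally I would enumerate the four sign patterns $\left(\varepsilon_{1},\varepsilon_{2}\right)$. The combinations $\left(+,-\right),\left(+,+\right),\left(-,-\right),\left(-,+\right)$ produce the quantities $-2$, $-2\left(B+1\right)$, $-2\left(A+1\right)$, $-2\left(A+B+1\right)$ inside the bracket, so the integral runs precisely through $\pm2\pi i$ times $1$, $B+1$, $A+1$, $A+B+1$, which is the claimed set. The point requiring care is the bookkeeping of signs and branches: the global $\pm$ of Proposition \ref{premier} (orientation of the encircling contour) together with the two signs $\varepsilon_{1},\varepsilon_{2}$ (which branch of each residue square root is selected by analytic continuation of $\sqrt{\varphi_{a,b,\lambda}}$ from $\infty$ across $\gamma$) is exactly what distinguishes the four homotopy classes of $\gamma$ in $\mathbb{C}\setminus\left\{-1,1\right\}$. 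Verifying that these are genuinely realized and that exactly the four listed values occur, with no spurious ones, is the only real content beyond routine algebra.
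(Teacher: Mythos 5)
Your proposal is correct and follows exactly the route the paper takes: it identifies $\varpi_{A,B}=\varpi_{\zeta_-,\zeta_+,\lambda}$ with $\lambda=i(A+B+2)$, substitutes the values $(A+B+2)\sqrt{(\zeta_\pm-1)(\zeta_\mp-1)}=\pm2A$ and $(A+B+2)\sqrt{(\zeta_\pm+1)(\zeta_\mp+1)}=\pm2B$ into Proposition \ref{premier}, and enumerates the four sign choices. You in fact supply more detail than the paper does (the branch relation $\sqrt{\varphi_{a,b,\lambda}}=i\sqrt{R_{A,B}}$ and the explicit sign bookkeeping yielding $-2$, $-2(A+1)$, $-2(B+1)$, $-2(A+B+1)$), and that bookkeeping checks out.
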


Taking into account assumptions (\ref{cond sur Aet B}), it follows from Lemma
(\ref{necessaire}) and Proposition \ref{second}, that, if $A+B\notin%
\mathbb{R}
,$ then there is always exactly one short trajectory of the quadratic
differential $\varpi_{A,B}$ connecting $\zeta_{-}$ and $\zeta_{+}.$

If $A+B\in%
\mathbb{R}
,$ then the critical graph is bounded; with the same idea of the proof of
Proposition \ref{2traj1pole}, one can show that there are at most two critical
trajectories diverging to the poles $\pm1.$ We conclude the existence of two
two short trajectories, the case of short trajectory connecting a zero to
itself can be discarded easily by Lemma \ref{Teich}. See Figure \ref{Fig6}.
\begin{figure}[h]%
\centering
\fbox{\includegraphics[
height=2.0903in,
width=3.7048in
]%
{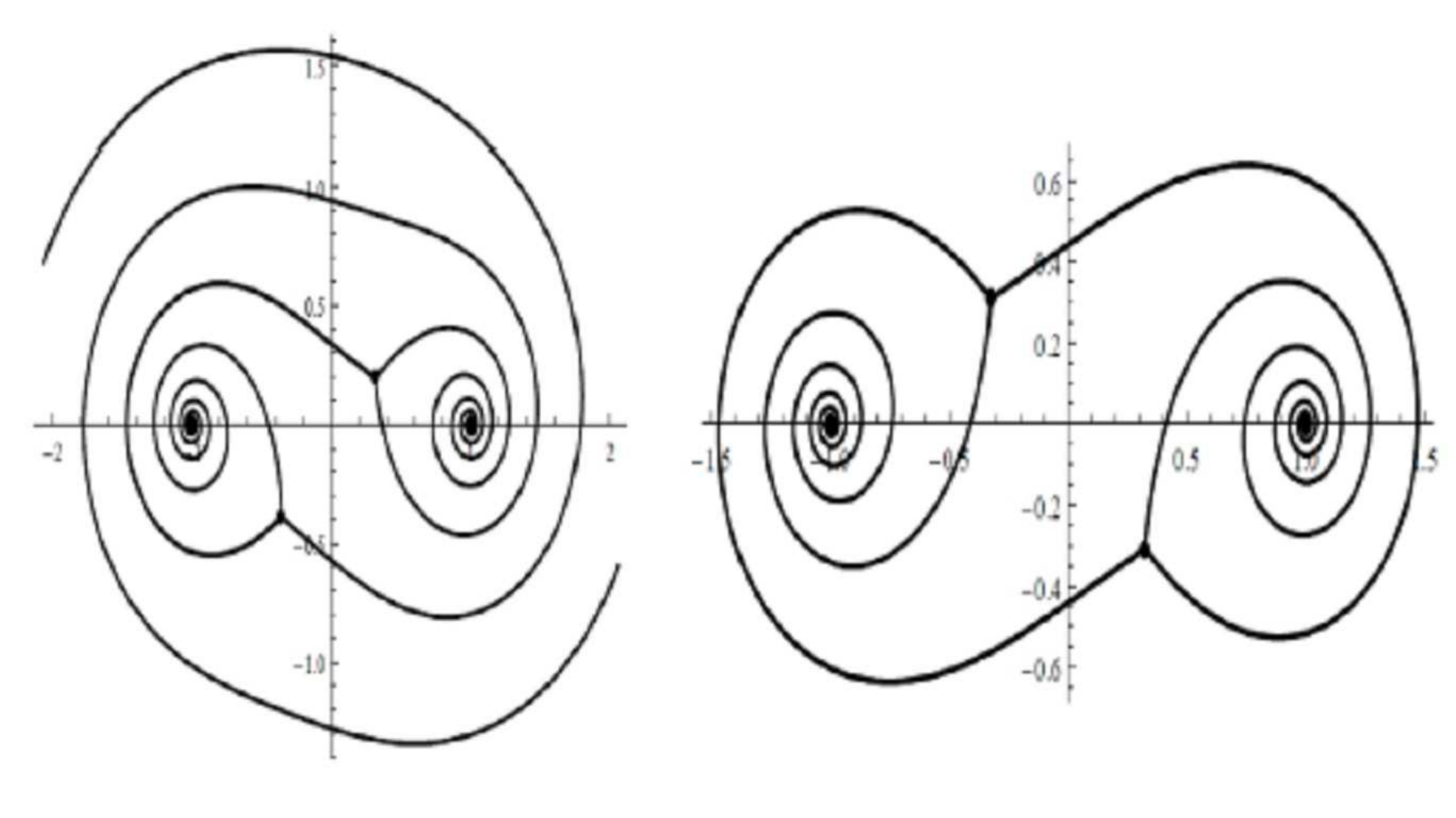}%
}\caption{Critical graphs when, $A+B\protect\notin\mathbb{R} ,$ here
$A=1+0.1i$ and $B=-1+0.1i$ (right); $A+B\in\mathbb{R} ,$ here $A=1+0.1i$ and
$B=-1-0.1i$ (left).}%
\label{Fig6}%
\end{figure}

\begin{acknowledgement}
The authors are grateful to Professor Said Zarati from the University of El
Manar, Tunis, for helpful discussions on Homotopy theory. This work was
partially supported by the research unit UR11ES87 from the University of
Gab\`{e}s in Tunisia.
\end{acknowledgement}

\texttt{Institut Sup\'{e}rieur des Sciences Appliqu\'{e}es et de
Technologie,6029 Avenue Omar Inb AL Khattab, Gab\`{e}s. Tunisia.}

\texttt{E-mail address:} \texttt{chouikhi.mondher@gmail.com}

\texttt{Institut Sup\'{e}rieur des Sciences Appliqu\'{e}es et de
Technologie,6029 Avenue Omar Inb AL Khattab, Gab\`{e}s. Tunisia. }

\texttt{E-mail address:} \texttt{faouzithabet@yahoo.fr}

$\ $
\end{document}